\def\url@leostyle{%
 \@ifundefined{selectfont}{\def\UrlFont{\sf}}{\def\UrlFont{\scriptsize\ttfamily}}} \makeatother\urlstyle{leo}
\newtheorem{theorem}{Theorem}
\newtheorem{proposition}[theorem]{Proposition}
\newtheorem{lemma}[theorem]{Lemma}
\newtheorem{corollary}[theorem]{Corollary}
\theoremstyle{definition}
\theoremstyle{remark}
\newtheorem{remark}[theorem]{Remark}
\numberwithin{equation}{section}
\numberwithin{theorem}{section}
\definecolor{Red}{rgb}{1.0,0,0.0}
\definecolor{Blue}{rgb}{0,0.0,1.0}
\def\cC{\mathcal{C}}
\def\cD{\mathcal{D}}
\def\cF{\mathcal{F}}
\def\cL{\mathcal{L}}
\def\cP{\mathcal{P}}
\def\cQ{\mathcal{Q}}
\def\bC{\mathbb{C}}
\def\bE{\mathbb{E}}
\def\bF{\mathbb{F}}
\def\bN{\mathbb{N}}
\def\bP{\mathbb{P}}
\def\bR{\mathbb{R}}
\def\sF{\mathscr{F}}
\def\mA{\mathsf{A}}
\def\mB{\mathsf{B}}
\def\mC{\mathsf{C}}
\def\mD{\mathsf{D}}
\def\mG{\mathsf{G}}
\def\mH{\mathsf{H}}
\def\mI{\mathsf{I}}
\def\mV{\mathsf{V}}
\newcommand{\wt}{\widetilde}
\newcommand{\wh}{\widehat}
\newcommand{\1}{\mathbbm{1}}            
\newcommand{\set}[1]{\{#1\}}            
\renewcommand{\mid}{\;|\;}              
\DeclareMathOperator{\dif}{d \!}        
\title{Wiener-Hopf factorization for time-inhomogeneous Markov chains and its application}
\author{
        Tomasz R. Bielecki\,\thanks{Department of Applied Mathematics, Illinois Institute of Technology
       \newline \hspace*{1.45em}  10 W 32nd Str, Building E1, Room 208, Chicago, IL 60616, USA
       \newline \hspace*{1.45em}  Emails: \url{tbielecki@iit.edu} (T. R. Bielecki), \url{cialenco@iit.edu} (I. Cialenco), \url{rgong2@iit.edu} (R. Gong) and \url{yhuang37@hawk.iit.edu} (Y. Huang)
       \newline \hspace*{1.45em}  URLs: \url{http://math.iit.edu/\~bielecki} (T. R. Bielecki),  \url{http://math.iit.edu/\~igor} (I. Cialenco) and \url{http://mypages.iit.edu/\~rgong2} (R. Gong)
        \vspace{0.5em}} ,
\and
        Igor Cialenco\,\footnotemark[1] ,
\and
        Ruoting Gong\,\footnotemark[1] ,
         \vspace{0.5em}
\and
        Yicong Huang\,\footnotemark[1]
         \vspace{0.5em}}
\date{ {\small
First Circulated: January 14, 2018
}}
\begin{document}

\maketitle

\vspace{-2em}



\smallskip

{\footnotesize
\begin{tabular}{l@{} p{350pt}}
  \hline \\[-.2em]
  \textsc{Abstract}: \ & In this paper we derive the Wiener-Hopf factorization  for a finite-state time-inhomogeneous Markov chain. To the best of our knowledge, this study is the first attempt to investigate the Wiener-Hopf factorization for time-inhomogeneous Markov chains. In this work we only deal with a special class of time-inhomogeneous Markovian generators, namely piece-wise constant, which allows to use an appropriately tailored randomization technique.  Besides the mathematical importance of the Wiener-Hopf factorization methodology, there is also an important computational aspect: it allows for efficient computation of important functionals of Markov chains. \\[0.5em]
\textsc{Keywords:} \ & Wiener-Hopf factorization, inhomogeneous Markov chain, fluctuation theory, randomization method, additive functional.  \\
\textsc{MSC2010:} \ & 60J27, 60J28, 60K25 \\[1em]
  \hline
\end{tabular}
}



\section{Introduction}
In this paper we derive the Wiener-Hopf factorization (WHf) for a finite-state time-inhomogeneous Markov chain. As far as we know, our study is the first attempt to investigate the Wiener-Hopf factorization for time-inhomogeneous Markov chains. In this pioneering study we only deal with a special class of time-inhomogeneous Markovian generators, namely piece-wise constant. This allows to use an appropriately tailored randomization technique.  Besides the mathematical importance of the WHf, there is an important computational aspect: this methodology allows for very efficient computation of important functionals of Markov chains.

The Wiener-Hopf factorization  for finite-state  Markov chains  was  originally derived in \cite{Barlow1980} in the time-homogeneous case; see also \cite{London1982} and  \cite{Williams1991}. For the WHf in case of time-homogeneous Feller Markov processes we refer to \cite{Williams2008}. For some related applied work we refer to \cite{Avram2003}, which deals with the ruin problem, and to \cite{Asmussen1995,rogers1994fluid,rogers_shi_1994} that study fluid models.
In addition, \cite{Kennedy1990} studies the so called ``noisy'' Wiener-Hopf factorizations; for applications see \cite{Asmussen1995,rogers1994fluid,rogers_shi_1994,Jobert2006,Jiang2008,Mijatovic2011,Jiang2012,Hieber2014,Hainaut2016}.

It needs to be stressed that even though the classical WHf of \cite{Barlow1980} can be applied to the generator matrix, say $\mG_t$, of a time-inhomogeneous Markov chain $X$ at every time $t$, these factorizations do not have any probabilistic meaning with regard to the process $X$. In particular, they are of no use for computing functionals such as \eqref{eq:tau0+}-\eqref{eq:taut-} below. So, a relevant WHf for a time-inhomogeneous Markov chain requires a different approach than the one that would just directly apply the results of \cite{Barlow1980} to each $\mG_t$, $t\geq 0$.

The paper is organized as follows. In Section \ref{sec:setup} we provide a motivation and the setup up for our problem. In Section \ref{subsec:th} we introduce a randomization method and we give the main results of the paper. Section \ref{sec:Numerical} provides a numerical algorithm for computing our version of the WHf and its application in a specific example. Finally, we give some supporting results in the Appendix.

\section{Motivation and problem set-up}\label{sec:setup}

Let $\mathbf{E}$ be a finite set, $(\Omega,\sF,\bP)$ be a complete probability space, and $X:=(X_{t})_{t\geq 0}$ be a {\it time-inhomogeneous} Markov chain on $(\Omega,\sF,\bP)$ with state space $\mathbf{E}$ and generator function $\mG=\{\mG_{t},t\geq 0\}$. In particular, each $\mG_{t}$ is a $|\mathbf{E}|\times |\mathbf{E}|$ matrix. We assume that $\bP\left(X_{0}=i\right)>0$ for each $i\in\mathbf{E}$  and we let $\bP^{i}$ be the probability measure on $(\Omega,\sF)$ defined by
\begin{align*}
\bP^{i}(A):=\bP\left(A\,|\,X_{0}=i\right),\quad A\in\sF,
\end{align*}
with $\bE^{i}$ denoting the associated expectation.

In this paper we assume that the generator $\mG$ is piecewise constant, namely we assume that
\begin{align*}
\mG_{t}=\left\{\begin{array}{ll} \mG_{1},\quad &\text{if }\,s_{0}\leq t<s_{1},\\ \mG_{2},\quad &\text{if }\,s_{1}\leq t<s_{2},\\ \,\,\vdots & \\ \mG_{n},\quad &\text{if }\,s_{n-1}\leq t<s_{n},\\ \mG_{n+1},\quad &\text{if }\,t\geq s_{n}, \end{array}\right.
\end{align*}
 for some $n\in\bN$ and $0=s_{0}<s_{1}<\ldots<s_{n}$.
 Without loss of generality we assume that $\mG_{1},\ldots,\mG_{n+1}$ are not sub-Markovian. That is, the sums of row elements of $\mG_{k}$ are all zero, for any $k=1,\ldots,n+1$. The results of this paper carry over to the sub-Markovian case by the standard augmentation of the state space.

Next, we consider a function $v:\mathbf{E}\rightarrow\bR\setminus\{0\}$ and we put
\begin{align*}
\mathbf{E}^{+}:=\left\{\left.i\in\mathbf{E}\,\right|v(i)>0\right\}\quad\text{and}\quad\mathbf{E}^{-}:=\left\{\left.i\in\mathbf{E}\,\right|v(i)<0\right\}.
\end{align*}
We  also define the additive functional
\begin{align*}
\varphi_{t}:=\int_{0}^{t}v(X_{u})\dif u,\quad t\geq 0,
\end{align*}
and the first passage times
\begin{align*}
\tau_{t}^{+}:=\inf\left\{\left.r\geq 0\,\right|\varphi_{r}>t\right\}\quad\text{and}\quad\tau_{t}^{-}:=\inf\left\{\left.r\geq 0\,\right|\varphi_{r}<-t\right\}.
\end{align*}

The main goal of this paper is to apply the Wiener-Hopf factorization technique, which we work out in Section \ref{subsec:th},  to compute the following expectations,
\begin{align}\label{eq:tau0+}
\Pi_{c}^{+}(i,j;s_{1},\ldots,s_{n})&:=\mathbb{E}\left(e^{-c\tau_{0}^{+}}\1_{\{X_{\tau_{0}^{+}}=j\}}|X_{0}=i\right),\quad i\in\mathbf{E}^{-},\,j\in\mathbf{E}^{+},\\
\label{eq:taut+}
\Psi_{c}^{+}(\ell,i,j;s_{1},\ldots,s_{n})&:=\mathbb{E}\left(e^{-c\tau_{\ell}^{+}}\1_{\{X_{\tau_{\ell}^{+}}=j\}}|X_{0}=i\right),\quad i\in\mathbf{E}^{+},\,j\in\mathbf{E}^{+},\,\ell>0, \\
\label{eq:tau0-}
\Pi_{c}^{-}(i,j;s_{1},\ldots,s_{n})&:=\mathbb{E}\left(e^{-c\tau_{0}^{-}}\1_{\{X_{\tau_{0}^{-}}=j\}}|X_{0}=i\right),\quad i\in\mathbf{E}^{+},\,j\in\mathbf{E}^{-},\\
\label{eq:taut-}
\Psi_{c}^{-}(\ell,i,j;s_{1},\ldots,s_{n})&:=\mathbb{E}\left(e^{-c\tau_{\ell}^{-}}\1_{\{X_{\tau_{\ell}^{-}}=j\}}|X_{0}=i\right),\quad i\in\mathbf{E}^{-},\,j\in\mathbf{E}^{-},\,\ell>0.
\end{align}
We will focus on the computation of $\Pi_{c}^{+}(i,j;s_{1},\ldots,s_{n})$ and $\Psi_{c}^{+}(\ell,i,j;s_{1},\ldots,s_{n})$. By symmetry, analogous results  can be obtained for $\Pi_{c}^{-}(i,j;s_{1},\ldots,s_{n})$ and $\Psi_{c}^{-}(\ell,i,j;s_{1},\ldots,s_{n})$.
To simplify the notations, we will frequently write $\Pi_{c}^{+}(i,j)$ and $\Psi_{c}^{+}(\ell,i,j)$ in place of $\Pi_{c}^{+}(i,j;s_{1},\ldots,s_{n})$ and  $\Psi_{c}^{+}(\ell,i,j;s_{1},\ldots,s_{n})$, respectively.

\section{A randomization method and the Wiener-Hopf factorization}\label{subsec:th}

In this section we construct a {\it time-homogeneous} Markov chain associated to $X$, by randomizing the discontinuity times $s_{1},\ldots,s_{n}$ of the generator $\mG$. This key construction will allow us to compute the expectations \eqref{eq:tau0+} and \eqref{eq:taut+} using analogous expectations corresponding to this time-homogeneous chain. The latter expectations can be computed using  Wiener-Hopf factorization theory of \cite{Barlow1980}.

Define $\bN_{n}:={\{0,\ldots,n\}}$, $\wt{\mathbf{E}}:=\bN_{n}\times\mathbf{E}$ and let $(\wt{\Omega},\wt{\cF},\wt{\bP})$ be a complete probability space. Next, let us consider a {\it time-homogeneous} Markov chain, say  $Z=(N,Y):=(N_{t},Y_{t})_{t\geq 0}$, defined on $(\wt{\Omega},\wt{\cF},\wt{\bP})$, taking values in $\wt{\mathbf{E}}$ and with generator matrix $\wt{\mG}\left((n_{1},j_{1}),(n_{2},j_{2}) \right)_{(n_{1},j_{1}),(n_{2},j_{2})\in \wt{\mathbf{E}}}$ given as
\begin{align*}
\wt{\mG}=\kbordermatrix{ & \{0\}\times\mathbf{E} & \{1\}\times\mathbf{E} & \cdots & \{n-1\}\times\mathbf{E} & \{n\}\times\mathbf{E} \\ \{0\}\times\mathbf{E} & \mG_{1}-q_{1}\mI & q_{1}\mI & \cdots & 0 & 0 \\ \{1\}\times\mathbf{E} & 0 & \mG_{2}-q_{2}\mI & \cdots & 0 & 0 \\ \vdots & \vdots & \vdots & \ddots & \vdots & \vdots \\ \{n-1\}\times\mathbf{E} & 0 & 0 & \cdots & \mG_{n}-q_{n}\mI & q_{n}\mI \\ \{n\}\times\mathbf{E} & 0 & 0 & \cdots & 0 & \mG_{n+1}},
\end{align*}
where $q_{1},\ldots,q_{n}$ are positive constants and $\mI$ is the identity matrix. For each $i\in\mathbf{E}$, we define  the probability measure $\wt{\bP}^{i}$  on $(\wt{\Omega},\wt{\cF})$ by
\begin{align}\label{eq:TildePi}
\wt{\bP}^{i}(A):=\wt{\bP}\left(\left.A\,\right|Z_{0}=(0,i)\right),\quad A\in\wt{\cF}.
\end{align}
The next  result regards the Markov property of process $N$.

\begin{proposition}\label{prop:NMC}
For any $i\in\mathbf{E}$, the process $N$ is a time-homogeneous Markov chain under $\wt{\bP}^{i}$, with generator matrix given by
\begin{align*}
\wt{\mG}_{N}=\kbordermatrix{ & 0 & 1 & \cdots & n-1 & n \\ 0 & -q_{1} & q_{1} & \cdots & 0 & 0 \\ 1 & 0 & -q_{2} & \cdots & 0 & 0 \\ \vdots & \vdots & \vdots & \ddots & \vdots & \vdots \\ n-1 & 0 & 0 & \cdots & -q_{n} & q_{n} \\ n & 0 & 0 & \cdots & 0 & 0}.
\end{align*}
\end{proposition}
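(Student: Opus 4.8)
The plan is to exploit the block structure of $\wt{\mG}$, which shows that the first coordinate $N$ of $Z=(N,Y)$ can only jump ``upward'' by one level, and always at a rate that is independent of the second coordinate $Y$. Concretely, I would establish that the partition of $\wt{\mathbf{E}}$ into the level sets $B_{k}:=\{k\}\times\mathbf{E}$, $k\in\bN_{n}$, is strongly lumpable for $\wt{\mG}$, and that the associated lumped generator is precisely $\wt{\mG}_{N}$. Since $N_{t}=k$ exactly when $Z_{t}\in B_{k}$, the Markov property of $N$ will then follow from that of $Z$.

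First I would record the lumpability computation. Fix $k\in\bN_{n}$ and $i\in\mathbf{E}$, and for $m\in\bN_{n}$ set $r_{k,m}(i):=\sum_{j\in\mathbf{E}}\wt{\mG}\bigl((k,i),(m,j)\bigr)$, the total rate at which $Z$ moves from $(k,i)$ into the level set $B_{m}$. Reading off the rows of $\wt{\mG}$ and using that each $\mG_{k+1}$ has zero row sums, one gets, for $k<n$,
\begin{align*}
r_{k,m}(i)=\begin{cases} -q_{k+1}, & m=k,\\ q_{k+1}, & m=k+1,\\ 0, & \text{otherwise}, \end{cases}
\end{align*}
while $r_{n,m}(i)=0$ for all $m$. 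The crucial point is that $r_{k,m}(i)$ does not depend on $i$ (this is immediate from the fact that the super-diagonal blocks $q_{k+1}\mI$ are scalar multiples of the identity); this is exactly strong lumpability for the level partition, and the common values $r_{k,m}$ are precisely the entries of $\wt{\mG}_{N}$.

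I would then convert the lumpability identity into the Markov property via the martingale problem. For a function $g:\bN_{n}\to\bR$, let $\bar{g}:\wt{\mathbf{E}}\to\bR$ be given by $\bar{g}(k,i):=g(k)$. The computation above yields
\begin{align*}
\wt{\mG}\,\bar{g}\,(k,i)=\sum_{m\in\bN_{n}} r_{k,m}\,g(m)=\bigl(\wt{\mG}_{N}\,g\bigr)(k),
\end{align*}
so $\wt{\mG}$ carries functions of $N$ to functions of $N$ and acts on them as $\wt{\mG}_{N}$. Since $Z$ is a time-homogeneous Markov chain with generator $\wt{\mG}$ under $\wt{\bP}^{i}$, the process
\begin{align*}
g(N_{t})-g(N_{0})-\int_{0}^{t}\bigl(\wt{\mG}_{N}\,g\bigr)(N_{u})\,\dif u=\bar{g}(Z_{t})-\bar{g}(Z_{0})-\int_{0}^{t}\wt{\mG}\,\bar{g}\,(Z_{u})\,\dif u
\end{align*}
is a martingale with respect to the natural filtration of $Z$, for every $g$. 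Thus $N$ solves the martingale problem for $\wt{\mG}_{N}$ relative to the (larger) filtration generated by $Z$.

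Finally, since $\wt{\mG}_{N}$ is the generator of a finite-state chain, its martingale problem is well posed, so the unique solution is the time-homogeneous Markov chain with generator $\wt{\mG}_{N}$ started from $N_{0}=0$ (which holds $\wt{\bP}^{i}$-a.s. because $Z_{0}=(0,i)$). In particular $N$ is Markov with the asserted generator. I expect the only genuinely delicate point to be the bookkeeping about filtrations: one must observe that solving the martingale problem relative to the richer filtration of $Z$ is a \emph{stronger} statement than, and therefore implies, the Markov property of $N$ in its own filtration. The lumpability computation itself is routine given the explicit form of $\wt{\mG}$.
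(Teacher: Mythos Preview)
Your approach is correct and genuinely different from the paper's. The paper proceeds by direct computation: it first shows by induction on $k$ that $\sum_{j_{2}\in\mathbf{E}}(\wt{\mG}^{k})((n_{1},j_{1}),(n_{2},j_{2}))=(\wt{\mG}_{N}^{k})(n_{1},n_{2})$ (your lumpability identity is the case $k=1$, and the paper iterates it), then sums the exponential series to obtain $\wt{\bP}^{i}(N_{t+s}=n_{2}\mid N_{t}=n_{1},Y_{t}=j)=e^{s\wt{\mG}_{N}}(n_{1},n_{2})$ independently of $j$, and finally verifies the Markov property of $N$ by an explicit calculation of $\wt{\bP}^{i}(N_{t_{m}}=n_{m}\mid N_{t_{m-1}}=n_{m-1},\ldots,N_{t_{1}}=n_{1})$. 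Your route packages the same underlying observation---that $\wt{\mG}$ acts on functions of the first coordinate through $\wt{\mG}_{N}$---into the martingale-problem framework, and then invokes well-posedness on a finite state space to conclude. The paper's argument is entirely elementary and self-contained, at the cost of some bookkeeping with conditional probabilities; your argument is shorter and more conceptual, but imports the (standard) fact that well-posedness of the martingale problem yields the Markov property. Both are valid; the lumpability viewpoint also makes transparent why the result would fail if the super-diagonal blocks were not scalar multiples of the identity.
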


\begin{proof} We will proceed in three steps.

\smallskip
\noindent
\textsf{Step 1.} We start by showing that
\begin{align}\label{eq:TildeGtoGN}
\sum_{j_{2}\in\mathbf{E}}\left(\wt{\mG}^{k}\right)\left((n_{1},j_{1}),(n_{2},j_{2}) \right)=\left(\wt{\mG}_{N}^{k}\right)(n_{1},n_{2}),
\end{align}
for any $j_{1}\in\mathbf{E}$, $k\in\mathbb{N}$, and $0\leq n_{1},n_{2}\leq n$,
In particular, note that the left-hand-side of \eqref{eq:TildeGtoGN} does not depend on $j_{1}$.

We will prove \eqref{eq:TildeGtoGN} by induction in $k$. Clearly \eqref{eq:TildeGtoGN} holds true for $k=1$.
Next, assume that \eqref{eq:TildeGtoGN} holds for some $k=\ell\in\mathbb{N}$. Now, for $\ell+1$,
\begin{align*}
\sum_{j_{2}\in\mathbf{E}}\left(\wt{\mG}^{\ell+1}\right)\left((n_{1},j_{1}),(n_{2},j_{2})\right)&=\sum_{j_{2}\in\mathbf{E}}\sum_{m=0}^{n}\sum_{j\in\mathbf{E}}\left(\wt{\mG}^{\ell}\right)\left((n_{1},j_{1}),(m,j)\right)\wt{\mG}\left((m,j),(n_{2},j_{2})\right)\\
&=\sum_{m=0}^{n}\sum_{j\in\mathbf{E}}\left(\wt{\mG}^{\ell}\right)\left((n_{1},j_{1}),(m,j)\right)\sum_{j_{2}\in\mathbf{E}}\wt{\mG}\left((m,j),(n_{2},j_{2})\right)\\
&=\sum_{m=0}^{n}\sum_{j\in\mathbf{E}}\left(\wt{\mG}^{\ell}\right)\left((n_{1},j_{1}),(m,j)\right)\wt{\mG}_{N}(m,n_{2})\\
&=\sum_{m=0}^{n}\left(\wt{\mG}_{N}^{\ell}\right)(n_{1},m)\wt{\mG}_{N}(m,n_{2})=\left(\wt{\mG}_{N}^{\ell+1}\right)(n_{1},n_{2}),
\end{align*}
where we used the inductive assumptions for $k=1$ and $k=\ell$ in the third and the fourth equalities, respectively. Hence \eqref{eq:TildeGtoGN} is established.

\smallskip\noindent
\textsf{Step 2.} We will show that
\begin{align}\label{eq:NNotDependonY}
\wt{\bP}^{i}\left(\left.N_{t+s}=n_{2}\,\right|N_{t}=n_{1}\right)=\wt{\bP}^{i}\!
\left(\left.N_{t+s}=n_{2}\,\right|N_{t}=n_{1},Y_{t}=j\right)=e^{s\wt{\mG}_{N}}(n_{1},n_{2}),
\end{align}
for any $t,s\geq 0$, $j\in\mathbf{E}$, and $0\leq n_{1}\leq n_{2}\leq n$.
In particular, note that the left-hand side of \eqref{eq:NNotDependonY}, and thus $\wt{\bP}^{i}(N_{t+s}=n_{2}|N_{t}=n_{1})$, does not depend on $t$.
We start by checking the second equality in \eqref{eq:NNotDependonY}. For any $t,s\geq 0$, $j\in\mathbf{E}$, and $0\leq n_{1}\leq n_{2}\leq n$,
\begin{align*}
\wt{\bP}^{i}\left(\left.N_{t+s}=n_{2}\,\right|N_{t}=n_{1},Y_{t}=j\right)&=\sum_{k\in\mathbf{E}}\wt{\bP}^{i}\left(\left.N_{t+s}=n_{2},Y_{t+s}=k\,\right|N_{t}=n_{1},Y_{t}=j\right)\\
&=\sum_{k\in\mathbf{E}}e^{s\,\wt{\mG}}\left((n_{1},j),(n_{2},k)\right)=\sum_{k\in\mathbf{E}}\sum_{\ell=0}^{\infty}\frac{s^{\ell}}{\ell!}\,\wt{\mG}^{\ell}\left((n_{1},j),(n_{2},k)\right)\\
&=\sum_{\ell=0}^{\infty}\frac{s^{\ell}}{\ell!}\sum_{k\in\mathbf{E}}\wt{\mG}^{\ell}\left((n_{1},j),(n_{2},k)\right)\\
&=\sum_{\ell=0}^{\infty}\frac{s^{\ell}}{\ell!}\,\wt{\mG}^{\ell}_{N}(n_{1},n_{2})=e^{s\wt{\mG}_{N}}(n_{1},n_{2}),
\end{align*}
where we used the result of Step 1  in the last two equalities. In particular, $\wt{\bP}^{i}(N_{t+s}=n_{2}|N_{t}=n_{1},Y_{t}=j)$ does not depend on the choice of $j\in\mathbf{E}$.

As far as the first equality in \eqref{eq:TildeGtoGN}, for any $t,s\geq 0$ and $0\leq n_{1}\leq n_{2}\leq n$,
\begin{align*}
\wt{\bP}^{i}\left(\left.N_{t+s}=n_{2}\,\right|N_{t}=n_{1}\right)&=\frac{\wt{\bP}^{i}\left(N_{t+s}=n_{2},N_{t}=n_{1}\right)}{\wt{\bP}^{i}\left(N_{t}=n_{1}\right)}=\frac{\sum_{\ell\in\mathbf{E}}\wt{\bP}^{i}\left(N_{t+s}=n_{2},N_{t}=n_{1},Y_{t}=j\right)}{\sum_{j\in\mathbf{E}}\wt{\bP}^{i}\left(N_{t}=n_{1},Y_{t}=j\right)}\\
&=\frac{\sum_{j\in\mathbf{E}}\wt{\bP}^{i}\left(\left.N_{t+s}=n_{2}\,\right|N_{t}=n_{1},Y_{t}=j\right)\wt{\bP}^{i}\left(N_{t}=n_{1},Y_{t}=j\right)}{\sum_{j\in\mathbf{E}}\wt{\bP}^{i}\left(N_{t}=n_{1},Y_{t}=j\right)}\\
&=\frac{\sum_{j\in\mathbf{E}}\wt{\bP}^{i}\left(N_{t}=n_{1},Y_{t}=j\right)}{\sum_{j\in\mathbf{E}}\wt{\bP}^{i}\left(N_{t}=n_{1},Y_{t}=j\right)}\,e^{s\wt{\mG}_{N}}(n_{1},n_{2})=e^{s\wt{\mG}_{N}}(n_{1},n_{2}).
\end{align*}

\noindent
\textsf{Step 3.} We are ready to complete the proof of the proposition. Towards this end we observe that, for any $m\in\bN$,  $0=t_{0}\leq t_{1}<\ldots<t_{m}$, and any $0\leq n_{1}\leq\ldots\leq n_{m}\leq n$,
\begin{align*}
\wt{\bP}^{i}(N_{t_{m}}=n_{m}\,\mid N_{t_{m-1}}  = & \, n_{m-1},\ldots,N_{t_{1}}=n_{1})=\frac{\wt{\bP}^{i}\left(N_{t_{1}}=n_{1},\ldots,N_{t_{m}}=n_{m}\right)}{\wt{\bP}^{i}\left(N_{t_{1}}=n_{1},\ldots,N_{t_{m-1}}=n_{m-1}\right)}\\
& =\frac{\sum_{j_{1},\ldots,j_{m}\in\mathbf{E}}\wt{\bP}^{i}\left(N_{t_{1}}=n_{1},Y_{t_{1}}=j_{1};\ldots;N_{t_{m}}=n_{m},Y_{t_{m}}=j_{m}\right)}{\sum_{j_{1},\ldots,j_{m-1}\in\mathbf{E}}\wt{\bP}^{i}\left(N_{t_{1}}=n_{1},Y_{t_{1}}=j_{1};\ldots;N_{t_{m-1}}=n_{m-1},Y_{t_{m}}=j_{m-1}\right)}\\
& =\frac{\sum_{j_{1},\ldots,j_{m}\in\mathbf{E}}\prod_{k=1}^{m}\wt{\bP}^{i}\left(\left.N_{t_{k}}=n_{k},Y_{t_{k}}=j_{k}\,\right|N_{t_{k-1}}=n_{k-1},Y_{t_{k-1}}=j_{k-1}\right)}{\sum_{j_{1},\ldots,j_{m-1}\in\mathbf{E}}\prod_{k=1}^{m-1}\wt{\bP}^{i}\left(\left.N_{t_{k}}=n_{k},Y_{t_{k}}=j_{k}\,\right|N_{t_{k-1}}=n_{k-1},Y_{t_{k-1}}=j_{k-1}\right)}\\
&=\sum_{j_{m}\in\mathbf{E}}\wt{\bP}^{i}\left(\left.N_{t_{m}}=n_{m},Y_{t_{m}}=j_{m}\,\right|N_{t_{m-1}}=n_{m-1},Y_{t_{m-1}}=j_{m-1}\right)\\
&=\wt{\bP}^{i}\left(\left.N_{t_{m}}=n_{m}\,\right|N_{t_{m-1}}=n_{m-1},Y_{t_{m-1}}=j_{m-1}\right)\\
&=\wt{\bP}^{i}\left(\left.N_{t_{m}}=n_{m}\,\right|N_{t_{m-1}}=n_{m-1}\right)=e^{(t_{m}-t_{m-1})\wt{\mG}_{N}}(n_{m-1},n_{m}),
\end{align*}
where we used the Markov property of $Z=(N,Y)$ under $\wt{\bP}^{i}$ in the third equality, and the result of Step 2 in the last two equalities.
The proof is complete.
\end{proof}

Let $\widetilde \bF^Y=(\widetilde {\cF}_{t}^{Y})_{t\geq 0}$ be the filtration generated by $Y$, and let $\wt{\cF}_{\infty}^{Y}=\sigma(\bigcup_{t\geq 0}\wt{\cF}_{t}^{Y})$. For each $i\in\mathbf{E}$, we will construct a probability measure $\overline{\bP}^{i}$ on $(\wt{\Omega},\wt{\cF}_{\infty}^{Y})$ such that, the law of $Y$ under $\overline{\bP}^{i}$ is the same as the law of $X$ under $\bP^{i}$. Moreover, we will establish a connection between $\overline{\bP}^{i}$ and $\wt{\bP}^{i}$. For this purpose, we first let
\begin{align*}
S_{k}:=\inf\left\{t\geq 0\,|\,N_{t}=k\right\},\quad k=1,\ldots,n.
\end{align*}
We will now derive the joint density of $N$, and $(S_{1},\ldots,S_{n})$ under $\wt{\bP}^{i}$. For that, we set
\begin{align}\label{eq:StoT}
T_{1}:=S_{1},\quad T_{k}:=S_{k}-S_{k-1},\quad k=2,\ldots,n.
\end{align}
It is shown in \cite[Section 1.1.4]{Syski1992} that $T_{k}$'s are independent and that
\begin{align*}
\wt{\bP}^{i}\left(T_{1}>t_{1},\ldots,T_{n}>t_{n} \right)=\prod_{k=1}^{n}\,e^{-q_{k}t_{k}},\quad t_{1},\ldots,t_{n}>0,
\end{align*}
which implies that the joint density of $(T_{1},\ldots,T_{n})$ is given by
\begin{align}\label{eq:densityT}
f_{T_{1},\ldots,T_{n}}(t_{1},\ldots,t_{n})=\prod_{k=1}^{n}q_{k}\,e^{-q_{k}t_{k}},\quad t_{1},\ldots,t_{n}>0.
\end{align}
Combining \eqref{eq:StoT} and \eqref{eq:densityT},  we deduce that
\begin{align*}
f_{S_{1},\ldots,S_{n}}(s_{1},\ldots,s_{n})=\prod_{k=1}^{n}q_{k}\,e^{-q_{k}(s_{k}-s_{k-1})},\quad 0=s_{0}<s_{1}<\ldots<s_{n}.
\end{align*}

\begin{theorem}\label{thm:ConstBarPi}
For any $i\in\mathbf{E}$, any $0<s_{1}<\ldots<s_{n}$, and any cylinder set $A\in\wt{\cF}_{\infty}^{Y}$ of the form
\begin{align*}
A=\left\{\left(Y_{u_{1}},\ldots,Y_{u_{m}}\right)\in B\right\},\quad 0\leq u_{1}<u_{2}<\ldots<u_{m},\quad B\subseteq\mathbf{E}^{m},\quad m\in\bN,
\end{align*}
the limit
\begin{align}\label{eq:DefBarPiCylSet}
\overline{\bP}^{i}\left(A;s_{1},\ldots,s_{n}\right):=\lim_{\Delta s_{k}\rightarrow 0,\,k=1,\ldots,n}\frac{\wt{\bP}^{i}\left(A,\,s_{k}<S_{k}\leq s_{k}+\Delta s_{k},\,k=1,\ldots,n\right)}{\wt{\bP}^{i}\left(s_{k}<S_{k}\leq s_{k}+\Delta s_{k},\,k=1,\ldots,n\right)},
\end{align}
exists, and can be extended to a probability measure $\overline{\bP}^{i}(\cdot\,;s_{1},\ldots,s_{n})$  on $(\wt{\Omega},\wt{\cF}_{\infty}^{Y})$. Moreover, for any $A\in\wt{\cF}_{\infty}^{Y}$, the function $\overline{\bP}^{i}(A;\,\ldots)$ is Borel measurable on $\{(s_{1},\ldots,s_{n})\in\bR^{n}\,|\,0<s_{1}<\ldots<s_{n}\}$, and
\begin{align}\label{eq:TildePiBarPi}
\wt{\bP}^{i}(A)=\int_{0}^{\infty}\int_{s_{1}}^{\infty}\cdots\int_{s_{n-1}}^{\infty}\overline{\bP}^{i}(A;s_{1},\ldots,s_{n})\prod_{k=1}^{n}\left(q_{k}\,e^{-q_{k}(s_{k}-s_{k-1})}\right)ds_{n}\cdots ds_{2}\,ds_{1}.
\end{align}
\end{theorem}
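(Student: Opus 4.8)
The plan is to identify the conditional object $\overline{\bP}^i(\,\cdot\,;s_1,\ldots,s_n)$ explicitly as the law of a time-inhomogeneous chain, and then read off all four assertions from that identification. The structural fact driving everything is that, under $\wt{\bP}^i$, the component $Y$ evolves within the block $\{k\}\times\mathbf{E}$ with the $Y$-generator $\mG_{k+1}$, while a transition of $N$ from $k-1$ to $k$ leaves $Y$ unchanged, since the corresponding off-diagonal block of $\wt{\mG}$ is $q_k\mI$. Because $\wt{\mG}_N$ only permits the moves $k\to k+1$, the path of $N$ is monotone and is determined by its jump times, so $\sigma(N)=\sigma(S_1,\ldots,S_n)$. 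Consequently, conditionally on $S_1,\ldots,S_n$, the process $Y$ should be a time-inhomogeneous Markov chain started at $i$ whose generator equals $\mG_k$ on $[S_{k-1},S_k)$ and $\mG_{n+1}$ on $[S_n,\infty)$ (with $S_0=0$) — that is, exactly the generator of $X$ when the switch times are $s_1,\ldots,s_n$. First I would prove this conditional statement rigorously by computing the finite-dimensional distributions of $(N,Y)$ from the block-triangular semigroup $e^{t\wt{\mG}}$ and checking that, on the event that the jumps of $N$ fall in prescribed windows, the law of $(Y_{u_1},\ldots,Y_{u_m})$ factorizes through the interval endpoints exactly as the piecewise-constant-generator chain does.

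Granting this, for a cylinder set $A=\{(Y_{u_1},\ldots,Y_{u_m})\in B\}$ I would set
\[
g(s_1,\ldots,s_n):=\wt{\bP}^i\bigl(A\mid S_1=s_1,\ldots,S_n=s_n\bigr),
\]
which by the previous step equals the probability that the chain with switch times $s_1,\ldots,s_n$ realizes $A$. Writing this as a sum over $B$ of products of the matrix exponentials $e^{(b-a)\mG_k}$ over the intervals cut out by merging $\{u_1,\ldots,u_m\}$ with $\{s_1,\ldots,s_n\}$ shows that $g$ is continuous (indeed piecewise real-analytic) on the open simplex $\{0<s_1<\cdots<s_n\}$. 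Since the window event $\{s_k<S_k\leq s_k+\Delta s_k,\,k=1,\ldots,n\}$ is $\sigma(N)$-measurable, the tower property together with the joint density $\prod_{k=1}^n q_k e^{-q_k(s_k-s_{k-1})}$ of $(S_1,\ldots,S_n)$ recorded before the theorem turns the numerator of \eqref{eq:DefBarPiCylSet} into $\int g(\sigma)\prod_{k=1}^n q_k e^{-q_k(\sigma_k-\sigma_{k-1})}\,d\sigma$ over $\prod_{k}(s_k,s_k+\Delta s_k]$, and the denominator into the same integral with $g\equiv 1$. Dividing and letting $\Delta s_k\to 0$, the continuity of $g$ and of the density yields that the limit exists and $\overline{\bP}^i(A;s_1,\ldots,s_n)=g(s_1,\ldots,s_n)$.

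This identification gives (b)--(d) with little extra work. For fixed $(s_1,\ldots,s_n)$ the set function $A\mapsto g(s_1,\ldots,s_n)$ is, by construction, the restriction to cylinder sets of a genuine probability measure, the law of the time-inhomogeneous chain, so it is automatically consistent and extends to a probability measure $\overline{\bP}^i(\,\cdot\,;s_1,\ldots,s_n)$ on $(\wt{\Omega},\wt{\cF}^Y_\infty)$. Measurability in $(s_1,\ldots,s_n)$ holds for cylinder sets since $g$ is continuous; the class of $A\in\wt{\cF}^Y_\infty$ for which $s\mapsto\overline{\bP}^i(A;s)$ is Borel is a $\lambda$-system containing the $\pi$-system of cylinder sets, so a monotone-class argument extends it to all of $\wt{\cF}^Y_\infty$. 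Finally, \eqref{eq:TildePiBarPi} is, for cylinder sets, precisely the tower identity $\wt{\bP}^i(A)=\wt{\bE}^i[g(S_1,\ldots,S_n)]$ combined with the joint density of $(S_1,\ldots,S_n)$; the same monotone-class argument upgrades it to arbitrary $A\in\wt{\cF}^Y_\infty$.

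The main obstacle is the first step: making the conditional law of $Y$ given the jump times of $N$ precise. The off-diagonal blocks $q_k\mI$ make it believable, but a clean proof requires either an explicit computation of $\wt{\bP}^i\bigl((Y_{u_1},\ldots,Y_{u_m})=(y_1,\ldots,y_m),\,s_k<S_k\leq s_k+\Delta s_k,\,k=1,\ldots,n\bigr)$ by inserting $e^{t\wt{\mG}}$ over the merged partition of $\{u_\ell\}\cup\{s_k\}$ and isolating the leading $\prod_{k}\Delta s_k$ term, or a regular-conditional-probability argument identifying the $\sigma(N)$-conditional finite-dimensional distributions of $Y$. One must also ensure the $o\bigl(\prod_{k}\Delta s_k\bigr)$ remainder vanishes after division by the denominator, which is exactly where the continuity of $g$ and of the exponential density enters.
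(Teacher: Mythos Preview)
Your proposal is correct and rests on the same core computation as the paper: the explicit evaluation of $\wt{\bP}^i\bigl(Y_{u_1}=i_1,\ldots,Y_{u_m}=i_m,\ s_k<S_k\leq s_k+\Delta s_k,\ k=1,\ldots,n\bigr)$ via the block structure of $e^{t\wt{\mG}}$ over the merged partition $\{u_\ell\}\cup\{s_k\}$. The paper isolates this as a separate lemma, reads off the limit \eqref{eq:DefBarPiCylSet}, and then---treating $\overline{\bP}^i(\,\cdot\,;s_1,\ldots,s_n)$ as a set function defined only on the algebra of cylinder sets---verifies finite additivity by hand (noting that countable additivity reduces to finite additivity because $\mathbf{E}$ is finite) and invokes Carath\'eodory to extend to $\wt{\cF}^Y_\infty$; the identification of $\overline{\bP}^i$ with the law of the time-inhomogeneous chain is deferred to the subsequent theorem.

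Your route inverts this order: you first identify the regular conditional law of $Y$ given $(S_1,\ldots,S_n)$ as the law of the piecewise-constant-generator chain, and then derive everything from that identification. This buys you the extension step for free---the limit on cylinder sets is already the restriction of a bona fide probability measure, so no Carath\'eodory is needed---and it makes the integral identity \eqref{eq:TildePiBarPi} an instance of the tower property rather than a separate computation. The cost is that the ``main obstacle'' you flag is exactly the lemma the paper proves, so the technical content is identical; you have reorganized it so that the probabilistic meaning is visible from the start rather than established afterwards. Both approaches finish with the same monotone-class argument to pass from cylinder sets to all of $\wt{\cF}^Y_\infty$ for measurability and for \eqref{eq:TildePiBarPi}.
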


In the proof of the theorem we will use the following lemma.
\begin{lemma}\label{lem:ConstBarPiCylSet}
Let us fix  $i\in\mathbf{E}$, $0<s_{1}<\ldots<s_{n}$, and let $0=k_{0}<k_{1}<\ldots<k_{n+1}$ be positive integers. In addition, let $0=u_0< u_{1}<\ldots<u_{k_{1}}\leq s_{1}<u_{k_{1}+1}<\ldots<u_{k_{2}}\leq s_{2}<\ldots\leq s_{n}<u_{k_{n}+1}<\ldots<u_{k_{n+1}}$, $i_0=i$ and $i_{1},\ldots,i_{k_{n+1}}\in\mathbf{E}$. Then, for any
cylinder set $A\in\wt{\cF}_{\infty}^{Y}$ of the form
\begin{align}\label{eq:CylSet}
A=\bigcap_{j=0}^{n}\left\{Y_{u_{k_{j}+1}}=i_{k_{j}+1},\ldots,Y_{u_{k_{j+1}}}\!=i_{k_{j+1}}\right\}
\end{align}
 we have
\begin{align}
\lim_{\Delta s_{\ell}\rightarrow 0,\,\ell=1,\ldots,n}&\frac{\wt{\bP}^{i}\left(A,\,s_{\ell}<S_{\ell}\leq s_{\ell}+\Delta s_{\ell},\,\ell=1,\ldots,n\right)}{\wt{\bP}^{i}\left(s_{\ell}<S_{\ell}\leq s_{\ell}+\Delta s_{\ell},\,\ell=1,\ldots,n\right)}
=\!\prod_{\ell=0}^{n}\!\left(\!\prod_{m=k_{\ell}+1}^{k_{\ell+1}}\!\!\!\!e^{(u_{m}-u_{m-1})
\mG_{\ell}}\!(i_{m-1},i_{m})\!\right)  \nonumber \\
& \qquad\quad \cdot\sum_{j_{1},\ldots,j_{n}\in\mathbf{E}}\prod_{\ell=1}^{n}e^{(s_{\ell}-u_{k_{\ell}})
\mG_{\ell-1}}\!(i_{k_{\ell}},j_{\ell})e^{(u_{k_{\ell}+1}-s_{\ell})\mG_{\ell}}\!(j_{\ell},i_{k_{\ell}+1}).
\label{eq:BarPiSimCylSet}
\end{align}
 In particular, for any $A\in\wt{\cF}_{\infty}^{Y}$ of the form \eqref{eq:CylSet}, the above limit is Borel measurable with respect to $(s_{1},\ldots,s_{n})$ in $\Delta_n:=\{(s_{1},\ldots,s_{n})\in\bR^{n}\,|\,0<s_{1}<\ldots<s_{n}\}$.
\end{lemma}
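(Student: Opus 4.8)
The plan is to evaluate the numerator and the denominator of the ratio in \eqref{eq:BarPiSimCylSet} separately, as explicit functions of $\Delta s_1,\dots,\Delta s_n$, and then pass to the limit. The denominator is already understood: since $(S_1,\dots,S_n)$ has the joint density $\prod_{k=1}^n q_k e^{-q_k(s_k-s_{k-1})}$ derived above, one has $\wt{\bP}^i(s_\ell<S_\ell\le s_\ell+\Delta s_\ell,\ \ell=1,\dots,n)=\big(\prod_{k=1}^n q_k e^{-q_k(s_k-s_{k-1})}\big)\prod_{\ell=1}^n\Delta s_\ell\,(1+o(1))$ as all $\Delta s_\ell\to0$. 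The whole content of the lemma therefore lies in computing the numerator $\wt{\bP}^i(A,\ s_\ell<S_\ell\le s_\ell+\Delta s_\ell,\ \ell=1,\dots,n)$ to the same leading order and checking that the common prefactor $\prod_{k=1}^n q_k e^{-q_k(s_k-s_{k-1})}$ cancels.

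First I would exploit the block-triangular structure of $\wt{\mG}$ to write the numerator as an integral over the exact jump times. Fix jump times $\sigma_\ell\in(s_\ell,s_\ell+\Delta s_\ell]$ of $N$. For $\Delta s_\ell$ small the nesting hypothesis $u_{k_\ell}\le s_\ell<u_{k_\ell+1}$ forces $\sigma_\ell$ to sit strictly between the observation times $u_{k_\ell}$ and $u_{k_\ell+1}$, so on the event in question the level of $N$ at every observation time is pinned down: $N_{u_m}=\ell$ exactly when $k_\ell<m\le k_{\ell+1}$. I would then thread the time-homogeneous Markov property of $Z=(N,Y)$ through the ordered list of observation and jump times. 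On level $\ell$ the $Y$-coordinate propagates by the sub-Markovian semigroup $e^{t(\mG_\ell-q_{\ell+1}\mI)}=e^{-q_{\ell+1}t}e^{t\mG_\ell}$ read off the diagonal block of $\wt{\mG}$ (with no damping on the top level $n$), while each transition $\ell-1\to\ell$ contributes a rate factor $q_\ell$ and leaves $Y$ fixed, because the super-diagonal blocks equal $q_\ell\mI$. Summing the damping exponents over all levels reconstitutes exactly $\prod_{k=1}^n q_k e^{-q_k(\sigma_k-\sigma_{k-1})}$, which is the prefactor that will cancel the denominator.

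The delicate point — and the main obstacle — is the bookkeeping at the intervals that \emph{straddle} a switching time. Two consecutive observations on a common level $\ell$ simply contribute the factor $e^{(u_m-u_{m-1})\mG_\ell}(i_{m-1},i_m)$, and these assemble into the product over $\ell$ and $m$ appearing in \eqref{eq:BarPiSimCylSet}. But the interval joining the last observation $u_{k_\ell}$ before the switch to the first observation $u_{k_\ell+1}$ after it contains the unobserved jump time $\sigma_\ell$, across which the driving generator changes from $\mG_{\ell-1}$ to $\mG_\ell$. Here the value $j_\ell:=Y_{\sigma_\ell}$ is not observed and must be inserted as a free intermediate state and summed over, with $\mG_{\ell-1}$ governing the portion before $\sigma_\ell$ and $\mG_\ell$ the portion after; this is precisely the origin of the factor $\sum_{j_1,\dots,j_n}\prod_{\ell=1}^n e^{(s_\ell-u_{k_\ell})\mG_{\ell-1}}(i_{k_\ell},j_\ell)\,e^{(u_{k_\ell+1}-s_\ell)\mG_\ell}(j_\ell,i_{k_\ell+1})$. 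Care is needed to separate, for each level, the within-level transitions from the two straddling half-intervals so that no factor is double-counted or omitted.

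Finally I would collect the numerator as $\prod_{k=1}^n q_k\cdot\big(\text{integrand in }\sigma\big)$ integrated over $\sigma_\ell\in(s_\ell,s_\ell+\Delta s_\ell]$. Since every matrix-exponential entry appearing in the integrand is continuous in $\sigma$, the integrand converges to its value at $\sigma=s$ as $\Delta s_\ell\to0$, so the integral equals that value times $\prod_{\ell=1}^n\Delta s_\ell\,(1+o(1))$. Dividing by the denominator cancels both $\prod_{k=1}^n q_k e^{-q_k(s_k-s_{k-1})}$ and the trivial $\prod_\ell\Delta s_\ell$, leaving exactly the right-hand side of \eqref{eq:BarPiSimCylSet}. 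The asserted Borel measurability in $(s_1,\dots,s_n)\in\Delta_n$ is then automatic, since the limit is a finite sum of products of entries of matrix exponentials, each jointly continuous — hence measurable — in its time arguments.
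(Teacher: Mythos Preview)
Your proposal is correct and takes essentially the same approach as the paper: both exploit the block-triangular structure of $\wt{\mG}$ to read off the damped $Y$-semigroups $e^{-q_{\ell+1}t}e^{t\mG_{\ell+1}}$ on each level, identify the rate-$q_\ell$ level transitions that leave $Y$ fixed, sum over the unobserved intermediate $Y$-states at each switch, and cancel the resulting prefactor $\prod_k q_k e^{-q_k(s_k-s_{k-1})}$ against the denominator. The only minor technical difference is that the paper inserts \emph{two} deterministic intermediate times $s_\ell$ and $s_\ell+\Delta s_\ell$ per switch (so the ordinary Markov property at fixed times suffices) and extracts the rate via $\lim_{\Delta s_\ell\to 0}(\Delta s_\ell)^{-1}e^{\Delta s_\ell\wt{\mG}}((\ell-1,j_\ell),(\ell,j'_\ell))=q_\ell\delta_{j_\ell j'_\ell}$, whereas you integrate directly over the random jump time $\sigma_\ell$; the paper's route is marginally more elementary in that it never needs to invoke a conditional density given the jump times.
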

\begin{proof}
For  $\ell=1,\ldots,n$ choose $\Delta s_{\ell}>0$ so that, $s_{\ell}+\Delta s_{\ell}\leq u_{k_{\ell}+1}$. Then,
\begin{align*}
&\wt{\bP}^{i}\left(A,\,s_{\ell}<S_{\ell}\leq s_{\ell}+\Delta s_{\ell},\,\ell=1,\ldots,n\right)\\
&\quad =\wt{\bP}^{i}\left(Y_{u_{k_{\ell}+1}}=i_{k_{\ell}+1},\ldots,Y_{u_{k_{\ell+1}}}=i_{k_{\ell+1}},\,\ell=0,\ldots,n;\,N_{s_{\ell}}=\ell-1,N_{s_{\ell}+\Delta s_{\ell}}=\ell,\,\ell=1,\ldots,n\right)\\
&\quad =\sum_{j_{1},\ldots,j_{n},\,j_{1}',\ldots,j_{n}'\in\mathbf{E}}\wt{\bP}^{i}\left(Z_{u_{k_{\ell}+1}}=(\ell,i_{k_{\ell}+1}),\,\ldots,\,Z_{u_{k_{\ell+1}}}=(\ell,i_{k_{\ell+1}}),\,\ell=0,\ldots,n;\right.\\
&\qquad\qquad\qquad\qquad\qquad\qquad\,\,\,Z_{s_{\ell}}=(\ell-1,j_{\ell}),\,Z_{s_{\ell}+\Delta s_{\ell}}=(\ell,j_{\ell}'),\,\,\ell=1,\ldots,n\Big)\\
&\quad =\sum_{j_{1},\ldots,j_{n},\,j_{1}',\ldots,j_{n}'\in\mathbf{E}}\!\left[\prod_{\ell=0}^{n}\left(\prod_{m=k_{\ell}+1}^{k_{\ell+1}}\!e^{(u_{m}-u_{m-1})\wt{\mG}}\!\left((\ell,i_{m-1}),(\ell,i_{m})\right)\right)\right]\left(\prod_{\ell=1}^{n}e^{\Delta s_{\ell}\wt{\mG}}\!\left((\ell-1,j_{\ell}),(\ell,j_{\ell}')\right)\right)\\
&\qquad\qquad\qquad\qquad\quad\cdot\left(\prod_{\ell=1}^{n}e^{(s_{\ell}-u_{k_{\ell}})\wt{\mG}}\left((\ell-1,i_{k_{\ell}}),(\ell-1,j_{\ell})\right)e^{(u_{k_{\ell}+1}-s_{\ell}-\Delta s_{\ell})\wt{\mG}}\left((\ell,j_{\ell}'),(\ell,i_{k_{\ell}+1})\right)\right).
\end{align*}
In the above summation, the first product in the brackets provides the transition probabilities of the evolutions of $Z$ between the times $u_{k_{\ell}}$ and $u_{k_{\ell+1}}$, $\ell=0,\ldots,n$, the second product gives the transition probabilities of the evolutions of $Z$  between the times $s_{\ell}$ and $s_{\ell}+\Delta s_{\ell}$, for each $\ell=1,\ldots,n$, and the third product denotes the transition probabilities of the evolutions of $Z$  between the times $u_{k_{\ell}}$ and  $s_{\ell}$, and  between the times $s_{\ell}+\Delta s_{\ell}$ and $u_{k_{\ell}+1}$, for each $\ell=1,\ldots,n$.

Next, for each $\ell=1,\ldots,n$,
\begin{align*}
\lim_{\Delta s_{\ell}\rightarrow 0}\frac{1}{\Delta s_{\ell}}\,e^{\Delta s_{\ell}\wt{\mG}}\left((\ell-1,j_{\ell}),(\ell,j_{\ell}')\right)=\wt{\mG}\left((\ell-1,j_{\ell}),(\ell,j_{\ell}')\right)=\left\{\begin{array}{ll} q_{\ell}, &\text{if }\,j_{\ell}=j_{\ell}', \\ 0, &\text{otherwise}. \end{array}\right.
\end{align*}
Hence,
\begin{align}
&\lim_{\Delta s_{\ell}\rightarrow 0,\,\ell=1,\ldots,n}\frac{1}{\Delta s_{1}\cdots\Delta s_{n}}\,\wt{\bP}^{i}\left(A,\,s_{\ell}<S_{\ell}\leq s_{\ell}+\Delta s_{\ell},\,\ell=1,\ldots,n\right)\nonumber\\
&\quad =\prod_{\ell=0}^{n}\left(\prod_{m=k_{\ell}+1}^{k_{\ell+1}}e^{(u_{m}-u_{m-1})\wt{\mG}}\!\left((\ell,i_{m-1}),(\ell,i_{m})\right)\right)\nonumber\\
\label{eq:LimProbASell} &\qquad\cdot\sum_{j_{1},\ldots,j_{n}\in\mathbf{E}}\,\prod_{\ell=1}^{n}\left(q_{\ell}\,e^{(s_{\ell}-u_{k_{\ell}})\wt{\mG}}\left((\ell-1,i_{k_{\ell}}),(\ell-1,j_{\ell})\right)e^{(u_{k_{\ell}+1}-s_{\ell})\wt{\mG}}\left((\ell,j_{\ell}),(\ell,i_{k_{\ell}+1})\right)\right).
\end{align}
Note that, for any  $j_{1},j_{2}\in\mathbf{E}$, and any $k\in\mathbb{N}$,
\begin{align*}
\wt{\mG}^{k}\left((\ell,j_{1}),(\ell,j_{2})\right)&=(\mG_{\ell}-q_{\ell+1}\mI)^{k}(j_{1},j_{2}),\quad\ell=0,\ldots,n-1,\\
\wt{\mG}^{k}\left((n,j_{1}),(n,j_{2})\right)&=\mG_{n}^{k}(j_{1},j_{2}),
\end{align*}
so that, for $t\geq 0$, we have
\begin{align*}
e^{t\,\wt{\mG}}\left((\ell,j_{1}),(\ell,j_{2})\right)&=e^{t\,(\mG_{\ell}-q_{\ell+1}\mI)}(j_{1},j_{2})=e^{-q_{\ell+1}t}\,e^{t\,\mG_{\ell}}(j_{1},j_{2}),\quad\ell=0,\ldots,n-1,\\
e^{t\,\wt{\mG}}\left((n,j_{1}),(n,j_{2})\right)&=e^{t\,\mG_{n}}(j_{1},j_{2}).
\end{align*}
This, together with \eqref{eq:LimProbASell}, implies that
\begin{align*}
&\lim_{\Delta s_{\ell}\rightarrow 0,\,\ell=1,\ldots,n}\frac{1}{\Delta s_{1}\cdots\Delta s_{n}}\,\wt{\bP}^{i}\left(A,\,s_{\ell}<S_{\ell}\leq s_{\ell}+\Delta s_{\ell},\,\ell=1,\ldots,n\right)\\
&\quad =e^{-\sum_{\ell=1}^{n}q_{\ell}(u_{k_{\ell}}-u_{k_{\ell-1}})}\cdot\prod_{\ell=0}^{n}\left(\prod_{m=k_{\ell}+1}^{k_{\ell+1}}e^{(u_{m}-u_{m-1})\mG_{\ell}}(i_{m-1},i_{m})\right)\\ &\qquad\,\,e^{-\sum_{\ell=1}^{n}q_{\ell}(s_{\ell}-u_{k_{\ell}})}e^{-\sum_{\ell=1}^{n-1}q_{\ell}(u_{k_{\ell}+1}-s_{\ell})}\!\!\!\sum_{j_{1},\ldots,j_{n}\in\mathbf{E}}\prod_{\ell=1}^{n}\!\left(q_{\ell}e^{(s_{\ell}-u_{k_{\ell}})\mG_{\ell-1}}(i_{k_{\ell}},j_{\ell})e^{(u_{k_{\ell}+1}-s_{\ell})\mG_{\ell}}(j_{\ell},i_{k_{\ell}+1})\right)\\
&\quad =e^{-\sum_{\ell=1}^{n}q_{\ell}(s_{\ell}-s_{\ell-1})}\cdot\prod_{\ell=0}^{n}\left(\prod_{m=k_{\ell}+1}^{k_{\ell+1}}e^{(u_{m}-u_{m-1})\mG_{\ell}}(i_{m-1},i_{m})\right)\\
&\qquad\,\cdot\sum_{j_{1},\ldots,j_{n}\in\mathbf{E}}\,\prod_{\ell=1}^{n}\left(q_{\ell}\,e^{(s_{\ell}-u_{k_{\ell}})\mG_{\ell-1}}(i_{k_{\ell}},j_{\ell})\,e^{(u_{k_{\ell}+1}-s_{\ell})\mG_{\ell}}(j_{\ell},i_{k_{\ell}+1})\right)\\
&\quad =\left(\prod_{\ell=1}^{n}q_{\ell}\,e^{-q_{\ell}(s_{\ell}-s_{\ell-1})}\right)\cdot\left[\prod_{\ell=0}^{n}\left(\prod_{m=k_{\ell}+1}^{k_{\ell+1}}e^{(u_{m}-u_{m-1})\mG_{\ell}}(i_{m-1},i_{m})\right)\right]\\
&\qquad\,\,\cdot\sum_{j_{1},\ldots,j_{n}\in\mathbf{E}}\,\prod_{\ell=1}^{n}\left(e^{(s_{\ell}-u_{k_{\ell}})\mG_{\ell-1}}(i_{k_{\ell}},j_{\ell})\,e^{(u_{k_{\ell}+1}-s_{\ell})\mG_{\ell}}(j_{\ell},i_{k_{\ell}+1})\right).
\end{align*}
Finally, in view of the above and the fact that
\begin{align}\label{eq:nice-fact}
\lim_{\Delta s_{\ell}\rightarrow 0,\,\ell=1,\ldots,n}\frac{1}{\Delta s_{1}\cdots\Delta s_{n}}\,\wt{\bP}^{i}\left(s_{\ell}<S_{\ell}\leq s_{\ell}+\Delta s_{\ell},\,\ell=1,\ldots,n\right)=\prod_{\ell=1}^{n}q_{\ell}\,e^{-q_{\ell}(s_{\ell}-s_{\ell-1})},
\end{align}
we obtain \eqref{eq:BarPiSimCylSet}. The proof is complete.
\end{proof}

We are now ready to prove Theorem \ref{thm:ConstBarPi}.

\begin{proof}[Proof of Theorem \ref{thm:ConstBarPi}]
Let $\cC$ be the collection of all cylinder sets in $\wt{\cF}_{\infty}^{Y}$ of the form
\begin{align*}
C=\left\{\left(Y_{u_{1}},\ldots,Y_{u_{m}}\right)\in B\right\},\quad 0\leq u_{1}<u_{2}<\ldots<u_{m},\quad B\subseteq\mathbf{E}^{m},\quad m\in\bN.
\end{align*}
Clearly, $\cC$ is an algebra.

We first show that for any $C\in \cC$ the limit in \eqref{eq:DefBarPiCylSet} exists and that an explicit formula for it can be derived. In fact,  Lemma \ref{lem:ConstBarPiCylSet} shows that the limit in \eqref{eq:DefBarPiCylSet} exists, and belongs to $[0,1]$, for all the cylinder sets of the form \eqref{eq:CylSet}. Thus, for  a  cylinder set $C\in \cC$ an explicit formula for the limit on the right-hand side of \eqref{eq:DefBarPiCylSet} can be obtained as follows. First, we refine the partition $0\leq u_{1}<u_{2}<\ldots<u_{m}$ so that each subinterval of the partition $0<s_{1}<\ldots<s_{n}$ contains at least one of the $u_i$'s. Clearly, since $B_m$ is finite, $A$ can be decomposed into a finite union of disjoint cylinder sets of the form \eqref{eq:CylSet} on the refined partition. Moreover,  \eqref{eq:BarPiSimCylSet} provides an explicit formula for the limit in \eqref{eq:DefBarPiCylSet} for each of those cylinder sets of the form \eqref{eq:CylSet} on the refined partition. Finally,  taking the finite sum over all those limits, we obtain the limit in \eqref{eq:DefBarPiCylSet} for $C$. In particular, for every cylinder set $C$, the limit in \eqref{eq:DefBarPiCylSet} is Borel measurable with respect to $(s_{1},\ldots,s_{n})$ in $\Delta_{n}$.

In the second step we will demonstrate that the limit in \eqref{eq:DefBarPiCylSet} can be extended to a probability measure on $\sigma(\cC)=\wt{\cF}_{\infty}^{Y}$.
We start from verifying the countable additivity of $\overline{\bP}^{i}(\cdot\,;s_{1},\ldots,s_{n})$ on $\cC$ for any fixed $0<s_{1}<\ldots<s_{n}$.

Since $\mathbf{E}$ is a finite set, if $(C_{k})_{k\in\bN}$ is a sequence of disjoint cylinder sets in $\cC$ such that their union also belongs to $\cC$, then only finite many of them are non-empty. Therefore, it suffices to verify the finite additivity of $\overline{\bP}^{i}(\cdot\,;s_{1},\ldots,s_{n})$ on $\cC$. Let $C_{1},\ldots,C_{k}\in\cC$ be disjoint cylinder sets, then there exists $m\in\mathbb{N}$ and $0\leq u_{1}<u_{2}<\ldots<u_{m}$, such that
\begin{align*}
C_{\ell}=\left\{\left(Y_{u_{1}},\ldots,Y_{u_{m}}\right)\in B_{\ell}\right\}\,\,\,\,\text{for some }\,B_{\ell}\subseteq\mathbf{E}^{m},\quad\ell=1,\ldots,k.
\end{align*}
Each $\overline{\bP}^{i}(C_{\ell}\,;s_{1},\ldots,s_{n})$ can be represented as
\begin{align*}
\overline{\bP}^{i}(C_{\ell}\,;s_{1},\ldots,s_{n})=\sum_{A_{\ell}\in\cC_{\ell}}\overline{\bP}^{i}(A_{\ell}\,;s_{1},\ldots,s_{n}),\quad j=1,\ldots,k,
\end{align*}
where $\cC_{\ell}$, $\ell=1,\ldots,k$, are disjoint classes of disjoint simple cylinder sets. Therefore, we have
\begin{align*}
\sum_{\ell=1}^{k}\overline{\bP}^{i}(C_{\ell}\,;s_{1},\ldots,s_{n})&=\sum_{\ell=1}^{k}\sum_{A_{\ell}\in\cC_{\ell}}\overline{\bP}^{i}(A_{\ell}\,;s_{1},\ldots,s_{n})\\
&=\sum_{A\in\cC_{1}\cup\cdots\cup\cC_{k}}\overline{\bP}^{i}(A\,;s_{1},\ldots,s_{n})=\overline{\bP}^{i}\left(\bigcup_{\ell=1}^{k}C_{\ell}\,;s_{1},\ldots,s_{n}\right).
\end{align*}
Note that for any $0<s_{1}<\ldots<s_{n}$, $\overline{\bP}^{i}(C\,;s_{1},\ldots,s_{n})\leq 1$ for all $C\in\cC$. By the Carath\'{e}odory extension theorem, for any $0<s_{1}<\ldots<s_{n}$, $\overline{\bP}^{i}(\cdot\,;s_{1},\ldots,s_{n})$ can be uniquely extended to a probability measure on $(\wt{\Omega},\wt{\cF}_{\infty}^{Y})$.

Let $\Delta_{n}:=\{(s_{1},\ldots,s_{n})\in\bR^{n}\,|\,0<s_{1}<\ldots<s_{n}\}$
and
\begin{align*}
\cD_{1}:=\left\{\left.A\in\wt{\cF}_{\infty}^{Y}\,\right|\overline{\bP}^{i}(A\,;\,\cdot,\cdots,\cdot)\,\,\text{is Borel measurable on }\Delta_{n}\right\}.
\end{align*}
We will show that $\cD_{1}=\wt{\cF}_{\infty}^{Y}$. Towards this end, we first observe that  \eqref{eq:DefBarPiCylSet} and \eqref{eq:BarPiSimCylSet} imply that, for any $A\in \cC$,  $\overline{\bP}^{i}(A\,;\,\cdot,\cdots,\cdot)$ is Borel measurable with respect to $(s_{1},\ldots,s_{n})$ on $\Delta_{n}$, and thus $\cD_{1}\supset\cC$.

Next, we will show that $\cD_{1}$ is a monotone class. For this, let $(A_{k})_{k\in\mathbb{N}}\subset\cD_{1}$ be an increasing sequence of events, so that, for any $0<s_{1}<\ldots<s_{n}$, we have
\begin{align*}
\overline{\bP}^{i}\left(\bigcup_{k=1}^{\infty}A_{k}\,;\,s_{1},\ldots,s_{n}\right)=\lim_{m\rightarrow\infty}\overline{\bP}^{i}\left(A_{m}\,;\,s_{1},\ldots,s_{n}\right).
\end{align*}
Thus, $\overline{\bP}^{i}(\cup_{k}A_{k}\,;\cdot,\cdots,\cdot)$,  being a limit of a sequence of Borel measurable functions on $\Delta_{n}$, is Borel measurable on $\Delta_{n}$, and hence $\cup_{k}A_{k}\in\cD_{1}$.
Similarly, one can show that if $(A_{k})_{k\in\mathbb{N}}\subset\cD_{1}$ is a decreasing sequence of events, then $\cap_{k}A_{k}\in\cD_{1}$. Therefore, $\cD_{1}$ is a monotone class, and by the monotone class theorem $\cD_{1}=\sigma(\cC)=\wt{\cF}_{\infty}^{Y}$.

It remains to show that \eqref{eq:TildePiBarPi} holds true.   In view of \eqref{eq:DefBarPiCylSet} and \eqref{eq:nice-fact}, for any cylinder set $A\in\cC$,
\begin{align*}
\overline{\bP}^{i}(A\,;s_{1},\ldots,s_{n})&=\lim_{\Delta s_{k}\rightarrow 0,\,k=1,\ldots,n}\frac{\wt{\bP}^{i}\left(A,\,s_{k}<S_{k}\leq s_{k}+\Delta s_{k},\,k=1,\ldots,n\right)}{\wt{\bP}^{i}\left(s_{k}<S_{k}\leq s_{k}+\Delta s_{k},\,k=1,\ldots,n\right)}\\
&=\frac{\lim_{\Delta s_{k}\rightarrow 0,\,k=1,\ldots,n}(\Delta s_{1}\cdots\Delta s_{n})^{-1}\,\wt{\bP}^{i}\left(A,\,s_{k}<S_{k}\leq s_{k}+\Delta s_{k},\,k=1,\ldots,n\right)}{\lim_{\Delta s_{k}\rightarrow 0,\,k=1,\ldots,n}(\Delta s_{1}\cdots\Delta s_{n})^{-1}\,\wt{\bP}^{i}\left(s_{k}<S_{k}\leq s_{k}+\Delta s_{k},\,k=1,\ldots,n\right)}\\
&=\frac{\partial^{n}}{\partial s_{1}\cdots\partial s_{n}}\wt{\bP}^{i}\left(A,\,S_{k}\leq s_{k},\,k=1,\ldots,n\right)\cdot\left(\prod_{k=1}^{n}q_{k}\,e^{-q_{k}(s_{k}-s_{k-1})}\right)^{-1}.
\end{align*}
Hence, for any $A\in\cC$,
\begin{align*}
&\int_{0}^{\infty}\int_{s_{1}}^{\infty}\cdots\int_{s_{n-1}}^{\infty}\overline{\bP}^{i}(A;s_{1},\ldots,s_{n})\prod_{k=1}^{n}q_{k}\,e^{-q_{k}(s_{k}-s_{k-1})}\dif s_{1}\cdots \dif s_{n}\\
&\quad=\int_{0}^{\infty}\int_{s_{1}}^{\infty}\cdots\int_{s_{n-1}}^{\infty}\frac{\partial^{n}}{\partial s_{1}\cdots\partial s_{n}}\wt{\bP}^{i}\left(A,\,S_{k}\leq s_{k},\,k=1,\ldots,n\right)\dif s_{1}\cdots \dif s_{n}=\wt{\bP}^{i}(A),
\end{align*}
and thus  $\cC \subset \cD_{2}$, where $\cD_{2}:=\left\{\left.A\in\wt{\cF}_{\infty}^{Y}\,\right|\text{\eqref{eq:TildePiBarPi} holds for }A\right\}$. Next, for any increasing sequence of events $(A_{k})_{k\in\bN}\subset\cD_{2}$, we have that
\begin{align*}
\wt{\bP}^{i}\left(\bigcup_{k=1}^{\infty}A_{k}\right)=\lim_{k\rightarrow\infty}\wt{\bP}^{i}(A_{k})&=\lim_{k\rightarrow\infty}\int_{0}^{\infty}\!\!\!\int_{s_{1}}^{\infty}\!\!\cdots\!\int_{s_{n-1}}^{\infty}\!\overline{\bP}^{i}(A_{k};s_{1},\ldots,s_{n})\prod_{\ell=1}^{n}q_{\ell}\,e^{-q_{\ell}(s_{\ell}-s_{\ell-1})}\dif s_{1}\cdots \dif s_{n}\\
&=\int_{0}^{\infty}\!\!\!\int_{s_{1}}^{\infty}\!\!\cdots\!\int_{s_{n-1}}^{\infty}\!\overline{\bP}^{i}\!\left(\bigcup_{k=1}^{\infty}A_{k};s_{1},\ldots,s_{n}\right)\!\prod_{\ell=1}^{n}q_{\ell}\,e^{-q_{\ell}(s_{\ell}-s_{\ell-1})}\dif s_{1}\cdots \dif s_{n},
\end{align*}
where the last equality follows from the dominated convergence theorem as well as the fact that $\overline{\bP}^{i}(A_{k};s_{1},\ldots,s_{n})\leq 1$, for all $k\in\mathbb{N}$ and $0<s_{1}<\ldots<s_{n}$. Hence, $\cup_{k}A_{k}\in\cD_{2}$. Similarly, one can show that if $(A_{k})_{k\in\bN}\subset\cD_{2}$ is a decreasing sequence, then $\cap_{k}A_{k}\in\cD_{2}$. Therefore, $\cD_{2}$ is a monotone class, and by the monotone class theorem $\cD_{2}=\sigma(\cC)=\wt{\cF}_{\infty}^{Y}$.
This completes the proof.
\end{proof}

Next, we will prove  that the law of $Y$ under $\overline{\bP}^{i}$ is the same as that of $X$ under $\bP^{i}$.
As usual,  $\overline{\bE}^{i}(\cdot\,;s_{1},\ldots,s_{n})$ will denote the expectation associated with $\overline{\bP}^{i}(\cdot\,;s_{1},\ldots,s_{n})$, for $i\in\mathbf{E}$ and $0<s_{1}<\ldots<s_{n}$. In the sequel, if there is no ambiguity, we will omit the parameters $s_{1},\ldots,s_{n}$ in $\overline{\bP}^{i}$ and $\overline{\bE}^{i}$.

\begin{theorem}\label{thm:YBarPiXPi}
For any $i\in\mathbf{E}$ and $0<s_{1}<\ldots<s_{n}$, under $\overline{\bP}^{i}$, $Y$ is a time-inhomogeneous Markov chain with generator $\mG=\{\mG_{t},t\geq 0\}$. In particular, $X$ and $Y$ have the same law under respective probability measures $\bP^{i}$ and $\overline{\bP}^{i}$.
\end{theorem}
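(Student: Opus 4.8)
The plan is to prove the stronger statement that the finite-dimensional distributions of $Y$ under $\overline{\bP}^i$ coincide with those of $X$ under $\bP^i$. Since $X$ is by construction the time-inhomogeneous Markov chain with generator $\mG$ started from $i$, and since the law of an $\mathbf{E}$-valued process on $[0,\infty)$ is determined by its finite-dimensional distributions, both assertions of the theorem follow at once from this. To set up the comparison, for $0\le a\le b$ let $P(a,b)$ denote the transition matrix of the generator $\mG$: namely $P(a,b)=e^{(b-a)\mG_\ell}$ when $a,b$ lie in a common interval on which $\mG$ is constant, and in general the ordered product of such exponentials over the subintervals of $[a,b]$ cut out by the breakpoints $s_1,\ldots,s_n$. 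These satisfy the Chapman--Kolmogorov relation $P(a,c)=P(a,b)\,P(b,c)$, and for $0=u_0<u_1<\cdots<u_m$ and states $i_0=i,i_1,\ldots,i_m$ one has
\[
\bP^i\big(X_{u_1}=i_1,\ldots,X_{u_m}=i_m\big)=\prod_{r=1}^m P(u_{r-1},u_r)(i_{r-1},i_r).
\]

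First I would fix times that meet the ordering hypotheses of Lemma \ref{lem:ConstBarPiCylSet} and read off the right-hand side of \eqref{eq:BarPiSimCylSet}, which reassembles, consecutive pair by consecutive pair, into the transition matrices of $\mG$. A pair $u_{m-1}\to u_m$ lying inside a single interval contributes a single exponential factor $e^{(u_m-u_{m-1})\mG_\ell}(i_{m-1},i_m)=P(u_{m-1},u_m)(i_{m-1},i_m)$, since $\mG_\ell$ is precisely the generator active on that interval. A pair straddling a breakpoint $s_\ell$ contributes, through the summation over the intermediate state $j_\ell$, the factor $\sum_{j_\ell} e^{(s_\ell-u_{k_\ell})\mG_{\ell-1}}(i_{k_\ell},j_\ell)\,e^{(u_{k_\ell+1}-s_\ell)\mG_\ell}(j_\ell,i_{k_\ell+1})$, which is exactly $P(u_{k_\ell},u_{k_\ell+1})(i_{k_\ell},i_{k_\ell+1})$ by the definition of $P$ across $s_\ell$ and Chapman--Kolmogorov. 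Collecting all factors shows that \eqref{eq:BarPiSimCylSet} equals $\prod_{r} P(u_{r-1},u_r)(i_{r-1},i_r)$, i.e. the finite-dimensional distribution of $X$ at the same times and states.

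It then remains to pass from these special cylinder sets to arbitrary finite-dimensional events. Given any times $0\le u_1<\cdots<u_m$ and any $B\subseteq\mathbf{E}^m$, I would first refine the time grid so that each subinterval $(s_{\ell-1},s_\ell]$ contains at least one grid point (inserting extra times and summing over their states changes nothing, as the rows of each $P(a,b)$ sum to one) and so that the ordering hypotheses of Lemma \ref{lem:ConstBarPiCylSet} hold. I would then decompose $\{(Y_{u_1},\ldots,Y_{u_m})\in B\}$ into a finite disjoint union of singleton cylinders of the form \eqref{eq:CylSet} — exactly the refinement and decomposition already used in the proof of Theorem \ref{thm:ConstBarPi}. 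Summing the identity of the previous step over these singletons, and using that $\overline{\bP}^i(\cdot\,;s_1,\ldots,s_n)$ is a bona fide probability measure (Theorem \ref{thm:ConstBarPi}), gives $\overline{\bP}^i\big((Y_{u_1},\ldots,Y_{u_m})\in B\big)=\bP^i\big((X_{u_1},\ldots,X_{u_m})\in B\big)$ for every finite-dimensional event. Hence the two laws coincide; in particular $Y_0=i$ holds $\overline{\bP}^i$-a.s., and $Y$ is a Markov chain whose transition function equals $e^{(\cdot)\mG_\ell}$ on each interval where $\mG$ is constant, so that its generator is the piecewise-constant $\mG$.

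The main obstacle is the bookkeeping in the second step: one must match each consecutive-time factor produced by \eqref{eq:BarPiSimCylSet} with the corresponding block $P(u_{r-1},u_r)$, taking care that a pair straddling a breakpoint $s_\ell$ is resolved, via the summation over $j_\ell$, into the two-exponential (Chapman--Kolmogorov) form split at $s_\ell$ rather than a single exponential. Everything else — the refinement of the grid, the disjoint decomposition of $B$, and the underlying passage to the limit $\Delta s_\ell\to0$ — is routine and has already been packaged into Lemma \ref{lem:ConstBarPiCylSet} and Theorem \ref{thm:ConstBarPi}.
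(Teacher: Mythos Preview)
Your proposal is correct and rests on the same ingredient as the paper's proof, namely the explicit formula \eqref{eq:BarPiSimCylSet} of Lemma~\ref{lem:ConstBarPiCylSet} for $\overline{\bP}^{i}$ on simple cylinder sets, together with the refinement/decomposition step already used in the proof of Theorem~\ref{thm:ConstBarPi}. The only difference is organizational: the paper takes the ratio of two instances of \eqref{eq:BarPiSimCylSet} to verify the Markov property and identify the one-step transition as $e^{(u_{m}-u_{m-1})\mG_{\bullet}}(i_{m-1},i_{m})$, whereas you recognize \eqref{eq:BarPiSimCylSet} directly as the product $\prod_{r}P(u_{r-1},u_{r})(i_{r-1},i_{r})$ and conclude equality of finite-dimensional distributions with $X$ first---the underlying Chapman--Kolmogorov bookkeeping is the same.
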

\begin{proof}
Let $u_0,u_{1},\ldots,u_{m}$ be such that
\begin{align*}
0=u_{0}\leq u_{1}<\ldots<u_{k_{1}}\leq s_{1}<u_{k_{1}+1}<\ldots<u_{k_{2}}\leq s_{2}<\ldots\leq s_{n}<u_{k_{n}+1}<\ldots<u_{k_{n+1}}=u_{m}.
\end{align*}
By \eqref{eq:BarPiSimCylSet}, for any $i_{1},\ldots,i_{m}\in\mathbf{E}$,
\begin{align*}
\overline{\bP}^{i}(Y_{u_{m}}=i_{m}\mid & Y_{u_{m-1}}=i_{m-1},\ldots,Y_{u_{1}}=i_{1})=\frac{\overline{\bP}^{i}\left(Y_{u_{1}}=i_{1},\ldots,Y_{u_{m}}=i_{m}\right)}
{\overline{\bP}^{i}\left(Y_{u_{1}}=i_{1},\ldots,Y_{u_{m-1}}=i_{m-1}\right)}\\
&=\frac{\prod_{\ell=0}^{n}\left(\prod_{p=k_{\ell}+1}^{k_{\ell+1}}\!e^{(u_{p}-u_{p-1})
\mG_{\ell}}(i_{p-1},i_{p})\right)}{\left[\prod_{\ell=0}^{n-1}\left(\prod_{p=k_{\ell}+1}^{k_{\ell+1}}\!e^{(u_{p}-u_{p-1})\mG_{\ell}}(i_{p-1},i_{p})\right)\right]\left(\prod_{p=k_{n}+1}^{k_{n+1}-1}\!e^{(u_{p}-u_{p-1})\mG_{\ell}}(i_{p-1},i_{p})\right)}\\
&=e^{(u_{m}-u_{m-1})\mG_{n}}(i_{m-1},i_{m}).
\end{align*}
On the other hand, by \eqref{eq:BarPiSimCylSet} again,
\begin{align*}
\overline{\bP}^{i}  (Y_{u_{m}}&=i_{m}\mid Y_{u_{m-1}}=i_{m-1})=\frac{\overline{\bP}^{i}\left(Y_{u_{m}}=i_{m}\,Y_{u_{m-1}}=i_{m-1}\right)}{\overline{\bP}^{i}\left(Y_{u_{m-1}}=i_{m-1}\right)}\\
&=\frac{\sum_{i_{1},\ldots,i_{m-2}\in\mathbf{E}}\overline{\bP}^{i}\left(Y_{u_{1}}=i_{1},\ldots,Y_{u_{m}}=i_{m}\right)}{\sum_{i_{1},\ldots,i_{m-2}\in\mathbf{E}}\overline{\bP}^{i}\left(Y_{u_{1}}=i_{1},\ldots,Y_{u_{m-1}}=i_{m-1}\right)}\\
&=\frac{\sum_{i_{1},\ldots,i_{m-2}\in\mathbf{E}}\prod_{\ell=0}^{n}\left(\prod_{p=k_{\ell}+1}^{k_{\ell+1}}\!e^{(u_{p}-u_{p-1})\mG_{\ell}}(i_{p-1},i_{p})\right)}{\sum_{i_{1},\ldots,i_{m-2}\in\mathbf{E}}\left[\prod_{\ell=0}^{n-1}\left(\prod_{p=k_{\ell}+1}^{k_{\ell+1}}\!e^{(u_{p}-u_{p-1})\mG_{\ell}}(i_{p-1},i_{p})\right)\right]\left(\prod_{p=k_{n}+1}^{k_{n+1}-1}\!e^{(u_{p}-u_{p-1})\mG_{\ell}}(i_{p-1},i_{p})\right)}\\
&=e^{(u_{m}-u_{m-1})\mG_{n}}(i_{m-1},i_{m}).
\end{align*}
Analogous argument carries for any $u_0<u_{1}<\ldots<u_{m}$, which completes the proof.
\end{proof}

In analogy to $\varphi_t$ and $\tau_t^+$ we now define an additive functional $\psi$ given as
$\psi_{t}:=\int_{0}^{t}v(Y_{u})\dif u,\ t\geq 0$, and we consider the following first passage time $\rho_{t}^{+}:=\inf\left\{\left.r\geq 0\,\right|\psi_{r}>t\right\}, \ t\geq 0$.

We end this part of this section with the following corollary to  Theorem \ref{thm:YBarPiXPi}.
\begin{corollary}\label{thm:XYWHRelation}
For any $(s_{1},\ldots,s_{n})$ in $\Delta_{n}$, $c>0$, and $t>0$,
\begin{align}\label{eq:rho0+}
\Pi_{c}^{+}(i,j;s_{1},\ldots,s_{n})&=\overline{\bE}^{i}\left(e^{-c\rho_{0}^{+}}\1_{\{Y_{\rho_{0}^{+}}=j\}};s_{1},\ldots,s_{n}\right),\quad i\in\mathbf{E}^{-},\,j\in\mathbf{E}^{+},\\
\label{eq:rhot+} \Psi_{c}^{+}(t,i,j;s_{1},\ldots,s_{n})&=\overline{\bE}^{i}\left(e^{-c\rho_{t}^{+}}\1_{\{Y_{\rho_{t}^{+}}=j\}};s_{1},\ldots,s_{n}\right),\quad i\in\mathbf{E}^{+},\,j\in\mathbf{E}^{+}.
\end{align}
In particular, $\Pi_{c}^{+}(i,j;s_{1},\ldots,s_{n})$ and $\Psi_{c}^{+}(t,i,j;s_{1},\ldots,s_{n})$ are Borel measurable with respect to $(s_{1},\ldots,s_{n})$ in $\Delta_{n}$.
\end{corollary}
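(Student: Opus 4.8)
The plan is to read off the corollary directly from Theorem~\ref{thm:YBarPiXPi}, exploiting the fact that $\rho_t^+$ and $Y_{\rho_t^+}$ arise from the path of $Y$ through exactly the same measurable maps by which $\tau_t^+$ and $X_{\tau_t^+}$ arise from the path of $X$. Concretely, $\psi_t=\int_0^t v(Y_u)\,\dif u$ is the image of $(Y_u)_{u\ge0}$ under the same path functional that sends $(X_u)_{u\ge0}$ to $\varphi_t=\int_0^t v(X_u)\,\dif u$; hence the first passage time $\rho_t^+$ and the terminal position $Y_{\rho_t^+}$ are obtained from $(Y_u)_{u\ge0}$ by the same measurable functional, call it $G_{c,t,j}$, for which $e^{-c\tau_t^+}\1_{\{X_{\tau_t^+}=j\}}=G_{c,t,j}\bigl((X_u)_{u\ge0}\bigr)$ and $e^{-c\rho_t^+}\1_{\{Y_{\rho_t^+}=j\}}=G_{c,t,j}\bigl((Y_u)_{u\ge0}\bigr)$. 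The same observation with $t=0$ covers the statement \eqref{eq:rho0+}.

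By Theorem~\ref{thm:YBarPiXPi}, for each fixed $(s_1,\ldots,s_n)\in\Delta_n$ the law of $(Y_u)_{u\ge0}$ under $\overline{\bP}^i(\cdot\,;s_1,\ldots,s_n)$ coincides with the law of $(X_u)_{u\ge0}$ under $\bP^i$ on the coordinate $\sigma$-algebra $\wt{\cF}_\infty^Y$. Since $G_{c,t,j}$ is a bounded $\wt{\cF}_\infty^Y$-measurable functional, the transfer (push-forward) principle yields $\bE^i\bigl[G_{c,t,j}((X_u)_{u\ge0})\bigr]=\overline{\bE}^i\bigl[G_{c,t,j}((Y_u)_{u\ge0})\,;s_1,\ldots,s_n\bigr]$. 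Recalling the definitions \eqref{eq:tau0+} and \eqref{eq:taut+} of $\Pi_c^+$ and $\Psi_c^+$, this is precisely \eqref{eq:rho0+} and \eqref{eq:rhot+}.

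For the Borel measurability assertion I would run a monotone-class/approximation argument on top of Theorem~\ref{thm:ConstBarPi}, which already gives that $(s_1,\ldots,s_n)\mapsto\overline{\bP}^i(A;s_1,\ldots,s_n)$ is Borel measurable on $\Delta_n$ for every $A\in\wt{\cF}_\infty^Y$. Thus measurability holds for $\overline{\bE}^i[\1_A;s_1,\ldots,s_n]$, extends by linearity to simple $\wt{\cF}_\infty^Y$-measurable integrands, and passes to the limit for the bounded $\wt{\cF}_\infty^Y$-measurable random variable $W:=e^{-c\rho_t^+}\1_{\{Y_{\rho_t^+}=j\}}$. Crucially, the approximating simple functions are chosen for $W$ itself, which does not depend on $(s_1,\ldots,s_n)$, so the convergence is uniform in $(s_1,\ldots,s_n)$ and Borel measurability is preserved under the limit. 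Together with the two identities just established, this gives the measurability of $\Pi_c^+$ and $\Psi_c^+$ in $(s_1,\ldots,s_n)$.

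The core equalities are essentially immediate once Theorem~\ref{thm:YBarPiXPi} is in hand, so the only genuine care is needed in the auxiliary measurability points: that $\rho_t^+<\infty$ holds $\overline{\bP}^i$-a.s.\ (so that $Y_{\rho_t^+}$ is well defined), and that $\rho_t^+$ and $Y_{\rho_t^+}$ are bona fide $\wt{\cF}_\infty^Y$-measurable functionals of the path, the latter following from the right-continuity of $u\mapsto\psi_u$ together with the standard measurability of hitting times. I expect the measurability-in-$(s_1,\ldots,s_n)$ step to be the only mild obstacle, and it is dissolved precisely by approximating the ($(s_1,\ldots,s_n)$-independent) integrand $W$ rather than the measure.
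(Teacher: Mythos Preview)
Your proposal is correct and matches the paper's intended argument. In fact the paper gives no explicit proof of this corollary, treating it as an immediate consequence of Theorem~\ref{thm:YBarPiXPi} together with the measurability assertion of Theorem~\ref{thm:ConstBarPi}; your write-up spells out exactly the details one would supply. One minor remark: you need not verify that $\rho_t^+<\infty$ holds $\overline{\bP}^i$-a.s., since on the event $\{\rho_t^+=\infty\}$ the factor $e^{-c\rho_t^+}$ vanishes (as $c>0$), so the integrand $e^{-c\rho_t^+}\1_{\{Y_{\rho_t^+}=j\}}$ is unambiguously zero there and the functional $G_{c,t,j}$ is well defined on all paths.
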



\subsection{Wiener-Hopf Factorization for $Z=(N,Y)$}

This subsection is devoted to computing the expectations on the right-hand side  in \eqref{eq:rho0+} and \eqref{eq:rhot+}. This will be done by computing the corresponding expectations related to the \textit{time-homogeneous} Markov chain $Z=(N,Y)$. The latter computation will be done using the classical Wiener-Hopf factorization results for finite state time-homogeneous Markov chains, originally derived  in \cite{Barlow1980}.

We begin with a restatement of the classical Wiener-Hopf factorization applied to $Z$. Towards this end, we let $\wt{\mathbf{E}}^{+}:=\bN_{n}\times\mathbf{E}^{+}$ and $\wt{\mathbf{E}}^{-}:=\bN_{n}\times\mathbf{E}^{-}$, and $\wt{v}:\wt{\mathbf{E}}\rightarrow\bR\setminus\{0\}$ be a function on $\wt{\mathbf{E}}$ such that $\wt{v}(k,i)=v(i)$, for all $(k,i)\in\wt{\mathbf{E}}$. Next, we define the additive functional $\wt{\varphi}$ and the corresponding first passage times as
\begin{align*}
\wt{\varphi}_{t}:=\int_{0}^{t}\wt{v}(Z_{u})\dif u
,\quad
\wt{\tau}_{t}^{\pm}:=\inf\left\{\left.r\geq 0\,\right|\pm\wt{\varphi}_{r} > t\right\},\quad t\geq 0.
\end{align*}
Let $\wt{V}:=diag\{\wt{v}(k,i):(k,i)\in\wt{\mathbf{E}}\}$ (a diagonal matrix). We denote by $\wt{\mI}^{\pm}$  the identity matrix of dimension $|\wt{\mathbf{E}}^{\pm}|$. Finally,  $\cQ(m)$ will stand for the set of $m\times m$ generator matrices (i.e., matrices with non-negative off-diagonal entries and non-positive row sums), and $\cP(m,\ell)$ will be the set of $m\times\ell$ matrices whose rows are sub-probability vectors.
\begin{theorem}\label{thm:WHZ} \cite[Theorem 1 $\&$ 2]{Barlow1980}$\,\,$
Fix $c>0$. Then,
\begin{itemize}
\item [(i)] there exists a unique quadruple of matrices $(\wt{\Lambda}_{c}^{+},\wt{\Lambda}_{c}^{-},\wt{\mG}_{c}^{+},\wt{\mG}_{c}^{-})$, where $\wt{\Lambda}_{c}^{+}\in\cP(|\wt{\mathbf{E}}^{-}|,|\wt{\mathbf{E}}^{+}|)$, $\wt{\Lambda}_{c}^{-}\in\cP(|\wt{\mathbb{E}}^{+}|,|\wt{\mathbf{E}}^{-}|)$, $\wt{\mG}_{c}^{+}\in\cQ(|\wt{\mathbf{E}}^{+}|)$, and $\wt{\mG}_{c}^{-}\in\cQ(|\wt{\mathbf{E}}^{-}|)$, such that
    \begin{align}\label{eq:WHZ}
    \wt{V}^{-1}\left(\wt{\mG}-c\,\wt{\mI}\right)\left(\begin{array}{cc} \wt{\mI}^{+} & \wt{\Lambda}_{c}^{-} \\ \wt{\Lambda}_{c}^{+} & \wt{\mI}^{-} \end{array}\right)=\left(\begin{array}{cc} \wt{\mI}^{+} & \wt{\Lambda}_{c}^{-} \\ \wt{\Lambda}_{c}^{+} & \wt{\mI}^{-} \end{array}\right)\left(\begin{array}{cc} \wt{\mG}_{c}^{+} & 0 \\ 0 & -\wt{\mG}_{c}^{-} \end{array}\right);
    \end{align}
\item [(ii)] the matrices $\wt{\Lambda}_{c}^{+}$, $\wt{\Lambda}_{c}^{-}$, $\wt{\mG}_{c}^{+}$, and $\wt{\mG}_{c}^{-}$, admit the following probabilistic representations, \begin{align}\label{eq:TildeTau0+}
    \wt{\Lambda}_{c}^{+}((k,i),(\ell,j))&=\wt{\bE}\left(\left.e^{-c\wt{\tau}_{0}^{+}}\mathbf{1}_{\{Z_{\wt{\tau}_{0}^{+}}=(\ell,j)\}}\,\right|Z_{0}=(k,i)\right),\quad (k,i)\in\wt{\mathbf{E}}^{-},\,(\ell,j)\in\wt{\mathbf{E}}^{+},\\
    \label{eq:TildeTau0-} \wt{\Lambda}_{c}^{-}((k,i),(\ell,j))&=\wt{\bE}\left(\left.e^{-c\wt{\tau}_{0}^{-}}\mathbf{1}_{\{Z_{\wt{\tau}_{0}^{-}}=(\ell,j)\}}\,\right|Z_{0}=(k,i)\right),\quad (k,i)\in\wt{\mathbf{E}}^{+},\,(\ell,j)\in\wt{\mathbf{E}}^{-},\\
    \label{eq:TildeTaut+}
    e^{t\,\wt{\mG}_{c}^{+}}((k,i),(\ell,j))&=\wt{\bE}\left(\left.e^{-c\wt{\tau}_{t}^{+}}\mathbf{1}_{\{Z_{\wt{\tau}_{t}^{+}}=(\ell,j)\}}\,\right|Z_{0}=(k,i)\right),\quad (k,i)\in\wt{\mathbf{E}}^{+},\,(\ell,j)\in\wt{\mathbf{E}}^{+},\\
    \label{eq:TildeTaut-}
    e^{t\,\wt{\mG}_{c}^{-}}((k,i),(\ell,j))&=\wt{\bE}\left(\left.e^{-c\wt{\tau}_{t}^{-}}\mathbf{1}_{\{Z_{\wt{\tau}_{t}^{-}}=(\ell,j)\}}\,\right|Z_{0}=(k,i)\right),\quad (k,i)\in\wt{\mathbf{E}}^{-},\,(\ell,j)\in\wt{\mathbf{E}}^{-},
    \end{align}
    for any $t\geq 0$.
\end{itemize}
\end{theorem}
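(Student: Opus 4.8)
The plan is to verify that the pair $(Z,\wt{v})$ satisfies verbatim the hypotheses of the Wiener--Hopf factorization of \cite{Barlow1980}, and then to quote Theorems~1 and~2 of that reference directly. The entire point of the randomization construction of Section~\ref{subsec:th} was to replace the time-inhomogeneous chain $X$ by the \emph{time-homogeneous} chain $Z=(N,Y)$, and it is precisely this homogeneity that makes the classical theory applicable at all.

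First I would record the structural facts that Barlow's theorem requires. The state space $\wt{\mathbf{E}}=\bN_{n}\times\mathbf{E}$ is finite, being a product of two finite sets; the chain $Z$ is time-homogeneous with generator $\wt{\mG}$, by construction; and the velocity $\wt{v}$ is nowhere zero on $\wt{\mathbf{E}}$, since $v:\mathbf{E}\to\bR\setminus\{0\}$ and $\wt{v}(k,i)=v(i)$. This last fact means that $\wt{\mathbf{E}}=\wt{\mathbf{E}}^{+}\sqcup\wt{\mathbf{E}}^{-}$ is a genuine partition and that the diagonal matrix $\wt{V}$ is invertible, so the left-hand side of \eqref{eq:WHZ} is well defined. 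It also guarantees that $\wt{\varphi}$ has nonzero (hence strictly monotone) slope during every sojourn of $Z$, so that the passage times $\wt{\tau}_{t}^{\pm}$ are well defined and the factorization problem is nondegenerate.

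With these in hand, Theorem~1 of \cite{Barlow1980} gives part~(i): for each fixed $c>0$, the existence and uniqueness of the quadruple $(\wt{\Lambda}_{c}^{+},\wt{\Lambda}_{c}^{-},\wt{\mG}_{c}^{+},\wt{\mG}_{c}^{-})$, in the stated classes $\cP(\cdot,\cdot)$ and $\cQ(\cdot)$, solving \eqref{eq:WHZ}. Theorem~2 of \cite{Barlow1980} then supplies part~(ii): the identification of $\wt{\Lambda}_{c}^{\pm}$ with the passage kernels \eqref{eq:TildeTau0+}--\eqref{eq:TildeTau0-} and of $e^{t\wt{\mG}_{c}^{\pm}}$ with the level-$t$ passage kernels \eqref{eq:TildeTaut+}--\eqref{eq:TildeTaut-}. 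Here the discount $c>0$ acts as killing at rate $c$, which is what renders the passage times integrable and makes the $\wt{\Lambda}_{c}^{\pm}$ substochastic, their row defects absorbing the event that the relevant level is never crossed. Should one prefer a self-contained argument rather than a citation, the representations can be checked by verifying that the right-hand sides of \eqref{eq:TildeTau0+}--\eqref{eq:TildeTaut-} solve \eqref{eq:WHZ} via a first-step/strong-Markov decomposition and then appealing to the uniqueness in part~(i).

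I expect the only real work to be notational reconciliation, not fresh analysis: one must align the sign and block conventions of \cite{Barlow1980} with the ordering of $\wt{\mathbf{E}}$ into its $+$ and $-$ blocks used in \eqref{eq:WHZ}, and confirm that ``generator'' and ``sub-probability rows'' are meant in the same sense (the classes $\cQ(\cdot)$, $\cP(\cdot,\cdot)$ are introduced precisely to pin this down). In particular, no irreducibility of $Z$ is required; the block upper-triangular form of $\wt{\mG}$ inherited from the randomization is harmless, because Barlow's result holds for \emph{any} finite-state generator with a nonvanishing velocity. Hence there is no genuine mathematical obstacle here beyond this bookkeeping.
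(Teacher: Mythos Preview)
Your proposal is correct and matches the paper's approach exactly: the paper gives no proof of this theorem at all, treating it as a direct restatement of \cite[Theorems~1 \& 2]{Barlow1980} applied to the time-homogeneous chain $Z$, with the citation embedded in the theorem heading itself. Your explicit verification of the hypotheses (finite state space, time-homogeneity, nowhere-vanishing velocity) is more than the paper provides, but entirely in the same spirit.
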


\medskip
In what follows we will use the ``+'' part of the above formulae and only for $k=0$. Accordingly, we define (recall \eqref{eq:TildePi})
\begin{align}\label{eq:TildeTau0+0}
\wt{\Pi}_{c}^{+}(i,j,\ell)&:=\wt{\Lambda}_{c}^{+}((0,i),(\ell,j))=\wt{\bE}^{i}\left(e^{-c\wt{\tau}_{0}^{+}}\mathbf{1}_{\{Z_{\wt{\tau}_{0}^{+}}=(\ell,j)\}}\right),\quad i\in\mathbf{E}^{-},\,j\in\mathbf{E}^{+},\,\ell\in\bN,\\
\label{eq:TildeTaut+0}
\wt{\Psi}_{c}^{+}(t,i,j,\ell)&:=e^{t\,\wt{\mG}_{c}^{+}}((0,i),(\ell,j))=\wt{\bE}^{i}\left(e^{-c\wt{\tau}_{t}^{+}}\mathbf{1}_{\{Z_{\wt{\tau}_{t}^{+}}=(\ell,j)\}}\right),\quad i,j\in\mathbf{E}^{+},\,\ell\in\bN,\,t\geq 0.
\end{align}
Note that, for any $t\geq 0$, $\wt{v}(Z_{t})=v(Y_{t})$, which implies that $\wt{\varphi}_{t}=\psi_{t}$, and so $\rho_{t}^{+}=\wt{\tau}_{t}^{+}$, $\rho_{t}^{-}=\wt{\tau}_{t}^{-}$.
Hence, by taking summations over all $\ell\in\bN$ in \eqref{eq:TildeTau0+0} and \eqref{eq:TildeTaut+0}, we obtain that
\begin{align}\label{eq:SumTildeTau0+}
\wt{\bE}^{i}\left(e^{-c\rho_{0}^{+}}\1_{\{Y_{\rho_{0}^{+}}=j\}}\right)&=\sum_{\ell=0}^{n}\,\wt{\Pi}_{c}^{+}(i,j,\ell),\quad i\in\mathbf{E}^{-},\,j\in\mathbf{E}^{+},\\
\label{eq:SumTildeTaut+}
\wt{\bE}^{i}\left(e^{-c\rho_{t}^{+}}\1_{\{Y_{\rho_{t}^{+}}=j\}}\right)&=\sum_{\ell=0}^{n}\,\wt{\Psi}_{c}^{+}(t,i,j,\ell),\quad i,j\in\mathbf{E}^{+},\,t\geq 0.
\end{align}

Observe that, in view of \eqref{eq:TildePiBarPi},  if $U:\wt{\Omega}\rightarrow\bR$ is an $\wt{\cF}_{\infty}^{Y}$-measurable bounded random variable, then for any $i\in\mathbf{E}$,
\begin{align*}
\wt{\bE}^{i}(U)=\int_{0}^{\infty}\int_{s_{1}}^{\infty}\cdots\int_{s_{n-1}}^{\infty}\overline{\bE}^{i}(U;s_{1},\ldots,s_{n})\prod_{k=1}^{n}\left(q_{k}\,e^{-q_{k}(s_{k}-s_{k-1})}\right)ds_{n}\cdots ds_{2}ds_{1}.
\end{align*}
Therefore, in light of Corollary~\ref{thm:XYWHRelation}, \eqref{eq:SumTildeTau0+} and \eqref{eq:SumTildeTaut+}, we have that
\begin{align*} 
\widehat{\Pi}_{c}^{+}(i,j;q_1,\ldots,q_n)&:=\sum_{\ell=0}^{n}\,\wt{\Pi}_{c}^{+}(i,j,\ell)\\
&=\int_{0}^{\infty}\int_{s_{1}}^{\infty}\ldots\int_{s_{n-1}}^{\infty}\Pi_{c}^{+}(i,j;s_{1},\ldots,s_{n})\prod_{k=1}^{n}\left(q_{k}e^{-q_{k}(s_{k}-s_{k-1})}\right)ds_{n}\cdots ds_{2}\,ds_{1}.
\end{align*}
\begin{align*} 
\widehat{\Psi}_{c}^{+}(t,i,j;q_1,\ldots,q_n)&:=\sum_{\ell=0}^{n}\,\wt{\Psi}_{c}^{+}(t,i,j,\ell)\\
&=\int_{0}^{\infty}\!\!\!\int_{s_{1}}^{\infty}\!\!\ldots\!\int_{s_{n-1}}^{\infty}\!\Psi_{c}^{+}(t,i,j;s_{1},\ldots,s_{n})\prod_{k=1}^{n}\left(q_{k}e^{-q_{k}(s_{k}-s_{k-1})}\right)ds_{n}\!\cdots ds_{2}\,ds_{1}.
\end{align*}
By change of variables, we obtain
\begin{align*}
\widehat{\Pi}_{c}^{+}(i,j;q_1,\ldots,q_n)
&=\int_{0}^{\infty}\int_{0}^{\infty}\ldots\int_{0}^{\infty}\Pi_{c}^{+}(i,j;t_{1},\ldots,t_{1}+\ldots+t_{n})\prod_{k=1}^{n}\left(q_{k}e^{-q_{k}t_{k}}\right)dt_{1}\cdots dt_{n},\\
\widehat{\Psi}_{c}^{+}(i,j;q_1,\ldots,q_n)
&=\int_{0}^{\infty}\int_{0}^{\infty}\ldots\int_{0}^{\infty}\Psi_{c}^{+}(i,j;t_{1},\ldots,t_{1}+\ldots+t_{n})\prod_{k=1}^{n}\left(q_{k}e^{-q_{k}t_{k}}\right)dt_{1}\cdots dt_{n}.
\end{align*}
The above two equalities together with the argument in Section~\ref{sec:inverLaplaceTransform}, implies that
\begin{align*}
q_{1}^{-1}\cdots q_{n}^{-1}\widehat{\Pi}_{c}^{+}(i,j;q_1,\ldots,q_n), \qquad q_{1}^{-1}\cdots q_{n}^{-1}\widehat{\Psi}_{c}^{+}(i,j;q_1,\ldots,q_n)
\end{align*}
are well-defined for $q_{k}\in\bC^+:= \set{z\in\bC \mid \Re(z)>0},k=1,\ldots,n$, as being the Laplace transforms of $\Pi_{c}^{+}(i,j;t_{1},\ldots,t_{1}+\ldots+t_{n})$ and $\Psi_{c}^{+}(i,j;t_{1},\ldots,t_{1}+\ldots+t_{n})$, respectively.

\medskip
All the above leads to the following result, which is our main theorem, and where  we make use of the inverse multivariate Laplace transform. We refer to the Appendix for the definition and the properties of the inverse multivariate Laplace transform relevant to our set-up.

\begin{theorem}\label{th:main}  We have that

\begin{align}\label{eq:invLaplacePi+}
\Pi_{c}^{+}(i,j;s_{1},\ldots,s_{n})=\cL^{-1}\left(q_{1}^{-1}\cdots q_{n}^{-1}\widehat{\Pi}_{c}^{+}(i,j;q_1,\ldots,q_n)\right)(s_{1},s_{2}-s_{1},\ldots,s_{n}-s_{n-1}),
\end{align}
for any $i\in\mathbf{E}^{-}$, $j\in\mathbf{E}^{+}$, and

\begin{align}\label{eq:invLaplacePsi+}
\Psi_{c}^{+}(t,i,j;s_{1},\ldots,s_{n})=\cL^{-1}\left(q_{1}^{-1}\cdots q_{n}^{-1}\widehat{\Psi}_{c}^{+}(t,i,j;q_1,\ldots,q_n)\right)(s_{1},s_{2}-s_{1},\ldots,s_{n}-s_{n-1}),
\end{align}
for any $t>0$, $i,j\in\mathbf{E}^{+}$, where $\cL^{-1}$ is the inverse multivariate Laplace transform.

\end{theorem}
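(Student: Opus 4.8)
The plan is to recognize that Theorem \ref{th:main} is essentially an immediate consequence of the integral identities established just before it, reinterpreted through the lens of the multivariate Laplace transform. First I would fix $i\in\mathbf{E}^{-}$, $j\in\mathbf{E}^{+}$ (the argument for $\Psi_{c}^{+}$ with $i,j\in\mathbf{E}^{+}$ and fixed $t>0$ being verbatim the same), and recall from the change-of-variables computation preceding the theorem that
\begin{align*}
q_{1}^{-1}\cdots q_{n}^{-1}\widehat{\Pi}_{c}^{+}(i,j;q_1,\ldots,q_n)=\int_{0}^{\infty}\!\!\cdots\!\int_{0}^{\infty}\Pi_{c}^{+}(i,j;t_{1},\ldots,t_{1}+\cdots+t_{n})\prod_{k=1}^{n}e^{-q_{k}t_{k}}\,dt_{1}\cdots dt_{n}.
\end{align*}
The right-hand side is, by definition, the $n$-dimensional Laplace transform $\cL$ of the function
\[
g(t_{1},\ldots,t_{n}):=\Pi_{c}^{+}(i,j;t_{1},\,t_{1}+t_{2},\,\ldots,\,t_{1}+\cdots+t_{n})
\]
evaluated at $(q_1,\ldots,q_n)$. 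Thus the displayed identity reads simply $\cL(g)(q_1,\ldots,q_n)=q_{1}^{-1}\cdots q_{n}^{-1}\widehat{\Pi}_{c}^{+}(i,j;q_1,\ldots,q_n)$.

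Next I would apply the inverse multivariate Laplace transform $\cL^{-1}$ to both sides. The crux is an inversion theorem: if $g$ is a Borel-measurable function on $(0,\infty)^{n}$ whose transform $\cL(g)$ exists on $\{\Re(q_k)>0\}$, then $\cL^{-1}(\cL(g))=g$ almost everywhere, and in fact pointwise at continuity points. To invoke this I must verify the hypotheses needed by the inversion result quoted in the Appendix (Section~\ref{sec:inverLaplaceTransform}): that $g$ is measurable in $(t_1,\ldots,t_n)$, that it is bounded (here $g$ is dominated by $1$, since $\Pi_{c}^{+}$ is an expectation of an indicator-weighted discount factor lying in $[0,1]$), and that the transform is well-defined on the stated right half-planes. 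Measurability in $(s_1,\ldots,s_n)$, hence in $(t_1,\ldots,t_n)$ after the affine change of variables, is exactly the Borel-measurability conclusion furnished by Corollary~\ref{thm:XYWHRelation}; boundedness and the existence of the transform were already noted in the paragraph preceding the theorem. With these hypotheses in hand, the inversion theorem yields
\[
g(t_{1},\ldots,t_{n})=\cL^{-1}\!\left(q_{1}^{-1}\cdots q_{n}^{-1}\widehat{\Pi}_{c}^{+}(i,j;q_1,\ldots,q_n)\right)(t_1,\ldots,t_n).
\]

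Finally I would undo the change of variables. Since $g(t_1,\ldots,t_n)=\Pi_{c}^{+}(i,j;t_1,t_1+t_2,\ldots,t_1+\cdots+t_n)$, setting $s_k=t_1+\cdots+t_k$, equivalently $t_1=s_1$ and $t_k=s_k-s_{k-1}$ for $k\geq 2$, converts the left-hand side into $\Pi_{c}^{+}(i,j;s_1,\ldots,s_n)$ and evaluates the right-hand side at the point $(s_1,s_2-s_1,\ldots,s_n-s_{n-1})$, which is precisely the claimed formula \eqref{eq:invLaplacePi+}. The identical chain of reasoning, using \eqref{eq:rhot+} and the $\Psi$-version of the change-of-variables identity, gives \eqref{eq:invLaplacePsi+}. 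The main obstacle, such as it is, is not computational but foundational: one must ensure the regularity assumptions of the multivariate Laplace inversion theorem are genuinely met, so that inversion holds in the intended (pointwise or almost-everywhere) sense rather than merely formally. This is why the Borel-measurability assertions in Corollary~\ref{thm:XYWHRelation} and Lemma~\ref{lem:ConstBarPiCylSet}, together with the uniform bound $\Pi_{c}^{+},\Psi_{c}^{+}\in[0,1]$, were established in advance: they are exactly the inputs the inversion step consumes, and the proof amounts to checking that they suffice and then citing the Appendix.
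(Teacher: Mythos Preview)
Your proposal is correct and follows essentially the same approach as the paper: the paper does not even include a separate proof block for Theorem~\ref{th:main}, instead treating it as an immediate consequence of the change-of-variables identity and the Appendix material on the inverse Laplace transform (``All the above leads to the following result\ldots''). Your write-up is, if anything, more explicit than the paper in spelling out the verification of the hypotheses (measurability via Corollary~\ref{thm:XYWHRelation}, boundedness by $1$) and the final substitution $t_k=s_k-s_{k-1}$.
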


\begin{remark}
It needs to  be stressed that we can compute the values of $\widehat{\Pi}_{c}^{+}(i,j;q_1,\ldots,q_n)$ and $\widehat{\Psi}_{c}^{+}(t,i,j;q_1,\ldots,q_n)$ only for positive values of $q_{i}$'s. Thus, Theorem \ref{th:main} may not be directly applied to compute $\Pi_{c}^{+}(i,j;s_{1},\ldots,s_{n})$ and $\Psi_{c}^{+}(t,i,j;s_{1},\ldots,s_{n})$. However, we can approximate these functions, as explained in Section \ref{subsubsec:specialcase} by using only the values of $\widehat{\Pi}_{c}^{+}(i,j;q_1,\ldots,q_n)$ and $\widehat{\Psi}_{c}^{+}(t,i,j;q_1,\ldots,q_n)$ for positive values of $q_{i}$'s.

\end{remark}

\section{Numerical Example}\label{sec:Numerical}

In this section we will illustrate our theoretical results with a simple, but telling example. We first describe a numerical method to approximate  $\Pi_{c}^{+}$ and $\Psi_{c}^{+}$, and then we proceed with its application to a concrete example.

\subsection{Numerical Procedure to approximate $\Pi_{c}^{+}$ and $\Psi_{c}^{+}$}
   We only consider $\Pi_{c}^{+}$. The procedure to approximate $\Psi_{c}^{+}$ is analogous.

According to Theorem \ref{th:main} and Section \ref{subsubsec:specialcase}, to approximate $\Pi_{c}^{+}$, we  need  to compute
$
\widehat{\Pi}_{c}^{+}(i,j;q_1,\ldots,q_n)
$
for any $q_{1},\ldots,q_{n}>0$, and then to use  the Gaver-Stehfest algorithm. Note that $\widehat{\Pi}_{c}^{+}(i,j;q_1,\ldots,q_n)$ can be computed by solving \eqref{eq:WHZ} directly using the diagonalization method of \cite{rogers_shi_1994}. However, because of the special structure of $\wt{\mG}$, we can simplify the calculation by working on matrices of smaller dimensions. Towards this end we observe that  matrices in \eqref{eq:WHZ} can be written the block form as follows,
\begin{align}
  \wt{\mG}=
  \kbordermatrix{
    & (0,\mathbf{E}^{+}) & (1,\mathbf{E}^{+}) & \cdots &  (n,\mathbf{E}^{+}) & (0,\mathbf{E}^{-})  & (1,\mathbf{E}^{-}) & \cdots & (n,\mathbf{E}^{-}) \\
    (0,\mathbf{E}^{+}) & \mA_{1}-q_{1}\mI^{+} & q_{1}\mI^{+} & \cdots & 0 & \mB_{1} & 0 & \cdots & 0\\
    (1,\mathbf{E}^{+}) & 0 & \mA_{2}-q_{2}\mI^{+} & \cdots & 0 & 0 & \mB_{2} & \cdots & 0\\
    \vdots & \vdots & \vdots & \ddots & \vdots & \vdots & \vdots & \ddots & \vdots \\
    (n-1,\mathbf{E}^{+}) & 0 & 0 & \cdots  & q_{n}\mI^{+} & 0 & 0 & \cdots & 0\\
    (n,\mathbf{E}^{+}) & 0 & 0 & \cdots &  \mA_{n+1} & 0 & 0 & \cdots & \mB_{n+1}\\
    (0,\mathbf{E}^{-}) & \mC_{1} & 0 & \cdots & 0 & \mD_{1}-q_{1}\mI^{-} & q_{1}\mI^{-} & \cdots& 0\\
    (1,\mathbf{E}^{-}) & 0 & \mC_{2} & \cdots & 0 & 0 & \mD_{2}-q_{2}\mI^{-} & \cdots & 0\\
    \vdots & \vdots & \vdots & \ddots & \vdots & \vdots & \vdots & \ddots & \vdots \\
    (n-1,\mathbf{E}^{-}) & 0 & 0 & \cdots & 0 & 0 & 0 & \cdots & q_{n}\mI^{-}\\
    (n,\mathbf{E}^{-}) & 0 & 0 & \cdots & \mC_{n+1} & 0 & 0 & \cdots & \mD_{n+1}
  },
\end{align}
\begin{align}\label{eq:blockV}
  \wt{\mV}=
  \kbordermatrix{
    & (0,\mathbf{E}^{+}) & (1,\mathbf{E}^{+}) & \cdots & (n,\mathbf{E}^{+}) & (0,\mathbf{E}^{-})  & (1,\mathbf{E}^{-}) & \cdots & (n,\mathbf{E}^{-}) \\
    (0,\mathbf{E}^{+}) & \mV^{+} & 0 & \cdots & 0 & 0 & 0 & \cdots& 0\\
    (1,\mathbf{E}^{+}) & 0 & \mV^{+} & \cdots & 0 & 0 & 0 & \cdots& 0\\
    \vdots & \vdots & \vdots & \ddots & \vdots & \vdots & \vdots & \ddots & \vdots \\
    (n-1,\mathbf{E}^{+}) & 0 & 0 & \cdots &0 & 0 & 0 & \cdots & 0\\
    (n,\mathbf{E}^{+}) & 0 & 0 & \cdots & \mV^{+} & 0 & 0 & \cdots& 0\\
    (0,\mathbf{E}^{-}) & 0 & 0 & \cdots & 0 & \mV^{-} & 0 & \cdots & 0\\
    (1,\mathbf{E}^{-}) & 0 & 0 & \cdots& 0 & 0 & \mV^{-} & \cdots& 0\\
    \vdots & \vdots & \vdots & \ddots & \vdots & \vdots & \vdots & \ddots & \vdots \\
    (n-1,\mathbf{E}^{-}) & 0 & 0 & \cdots & 0 & 0 & 0 & \cdots & 0\\
    (n,\mathbf{E}^{-}) & 0 & 0 & \cdots& 0 & 0 & 0 & \cdots & \mV^{-}
  },
\end{align}
\begin{align}\label{eq:blockPi+}
  \wt{\Lambda}_{c}^{+}=
  \kbordermatrix{
    & (0,\mathbf{E}^{+}) & (1,\mathbf{E}^{+}) & \cdots & (n-1,\mathbf{E}^{+}) & (n,\mathbf{E}^{+}) \\
    (0,\mathbf{E}^{-}) &\wt{\Lambda}_{c,00}^{+} & \wt{\Lambda}_{c,01}^{+} & \cdots & \wt{\Lambda}_{c,0,n-1}^{+} & \wt{\Lambda}_{c,0n}^{+} \\
    (1,\mathbf{E}^{-}) & 0 & \wt{\Lambda}_{c,11}^{+} & \cdots & \wt{\Lambda}_{c,1,n-1}^{+} & \wt{\Lambda}_{c,1n}^{+} \\
    \vdots & \vdots & \vdots & \ddots & \vdots & \vdots \\
    (n-1,\mathbf{E}^{-}) & 0 & 0 & \cdots & \wt{\Lambda}_{c,n-1,n-1}^{+} & \wt{\Lambda}_{c,n-1,n}^{+} \\
    (n,\mathbf{E}^{-}) & 0 & 0 & \cdots & 0 & \wt{\Lambda}_{c,nn}^{+}
  }
\end{align}
\begin{align}\label{eq:blockPi-}
  \wt{\Lambda}_{c}^{-}=
  \kbordermatrix{
    & (0,\mathbf{E}^{-}) & (1,\mathbf{E}^{-}) & \cdots & (n-1,\mathbf{E}^{-}) & (n,\mathbf{E}^{-}) \\
    (0,\mathbf{E}^{+}) & \wt{\Lambda}_{c,00}^{-} & \wt{\Lambda}_{c,01}^{-} & \cdots & \wt{\Lambda}_{c,0,n-1}^{-} & \wt{\Lambda}_{c,0n}^{-} \\
    (1,\mathbf{E}^{+}) & 0 & \wt{\Lambda}_{c,11}^{-} & \cdots & \wt{\Lambda}_{c,1,n-1}^{-} & \wt{\Lambda}_{c,1n}^{-} \\
    \vdots & \vdots & \vdots & \ddots & \vdots & \vdots \\
    (n-1,\mathbf{E}^{+}) & 0 &0 & \cdots & \wt{\Lambda}_{c,n-1,n-1}^{-} & \wt{\Lambda}_{c,n-1,n}^{-} \\
    (n,\mathbf{E}^{+}) & 0 & 0 & \cdots & 0 & \wt{\Lambda}_{c,nn}^{-}
  },
\end{align}
\begin{align}\label{eq:blockG+}
  \wt{\mG}_{c}^{+}=
  \kbordermatrix{
    & (0,\mathbf{E}^{+}) & (1,\mathbf{E}^{+}) & \cdots & (n-1,\mathbf{E}^{+}) & (n,\mathbf{E}^{+}) \\
    (0,\mathbf{E}^{+}) & \wt{\mG}_{c,00}^{+} & \wt{\mG}_{c,01}^{+} & \cdots & \wt{\mG}_{c,0,n-1}^{+} & \wt{\mG}_{c,0n}^{+} \\
    (1,\mathbf{E}^{+}) & 0 & \wt{\mG}_{c,11}^{+} & \cdots & \wt{\mG}_{c,1,n-1}^{+} & \wt{\mG}_{c,1n}^{+} \\
    \vdots & \vdots & \vdots & \ddots & \vdots & \vdots \\
    (n-1,\mathbf{E}^{+}) & 0 & 0 & \cdots & \wt{\mG}_{c,n-1,n-1}^{+} & \wt{\mG}_{c,n-1,n}^{+} \\
    (n,\mathbf{E}^{+}) & 0 & 0 & \cdots & 0 & \wt{\mG}_{c,nn}^{+}
  },
\end{align}
and
\begin{align}\label{eq:blockG-}
  \wt{\mG}_{c}^{-}=
  \kbordermatrix{
    & (0,\mathbf{E}^{-}) & (1,\mathbf{E}^{-}) & \cdots & (n-1,\mathbf{E}^{-}) & (n,\mathbf{E}^{-}) \\
    (0,\mathbf{E}^{-}) & \wt{\mG}_{c,00}^{-} & \wt{\mG}_{c,01}^{-} & \cdots & \wt{\mG}_{c,0,n-1}^{-} & \wt{\mG}_{c,0n}^{-} \\
    (1,\mathbf{E}^{-}) & 0 & \wt{\mG}_{c,11}^{-} & \cdots & \wt{\mG}_{c,1,n-1}^{-} & \wt{\mG}_{c,1n}^{-} \\
    \vdots & \vdots & \vdots & \ddots & \vdots & \vdots \\
    (n-1,\mathbf{E}^{-}) & 0 & 0 & \cdots & \wt{\mG}_{c,n-1,n-1}^{-} & \wt{\mG}_{c,n-1,n}^{-} \\
    (n,\mathbf{E}^{-}) & 0 & 0 & \cdots &0 & \wt{\mG}_{c,nn}^{-}
  }.
\end{align}
Plugging \eqref{eq:blockV}--\eqref{eq:blockG-} into \eqref{eq:WHZ} and then comparing all the block entries on both sides, we end up with the following procedure to compute the factorization recursively.

In accordance to Theorem \ref{thm:WHZ}, for any generator matrix $\mH$ and any constant $c>0$, we denote by
\begin{align}
(\Lambda_{c}^{+}(\mH), \Lambda_{c}^{-}(\mH),\mG_{c}^{+}(\mH), \mG_{c}^{-}(\mH))
\end{align}
the unique quadruple constituting the classical Wiener-Hopf factorization (cf. \cite{Barlow1980}) corresponding to $\mH$ with killing rate $c$. In order to proceed, we let $c_{k}=q_{k}+c,\quad k\geq 1$.

\medskip
We are now ready to describe the algorithm to compute $q_{1}^{-1}\cdots q_{n}^{-1}\widehat{\Pi}_{c}^{+}(i,j;q_1,\ldots,q_n)$.
\begin{enumerate}[Step 1.]
\item \textbf{Compute the first diagonal:} for $k=1,\ldots,n+1$, compute
\begin{align}
\wt{\Lambda}_{c,k-1,k-1}^{+}=\Lambda_{c_{k}}^{+}(\mG_{k}),
\end{align}
using the diagonalization method in \cite{rogers_shi_1994}.
\item \textbf{Compute the second diagonal:} for $k=1,\ldots,n$, solve the following linear system
\begin{align}
q_{k}\mI^{+}+\mB_{k}\wt{\Lambda}_{c,k-1,k}^{+}&=\mV^{+}\wt{\mG}_{c,k-1,k}^{+},\\
[\mD_{k}-c_{k}\mI^{-}]\wt{\Lambda}_{c,k-1,k}^{+}+q_{k}\wt{\Lambda}_{c,kk}^{+}&=\mV^{-}\wt{\Lambda}_{c,k-1,k-1}^{+}\wt{\mG}_{c,k-1,k}^{+}+\mV^{-}\wt{\Lambda}_{c,k-1,k}^{+}\wt{\mG}_{c,kk}^{+},
\end{align}
for $\wt{\Lambda}_{c,k-1,k}^{+}$ and $\wt{\mG}_{c,k-1,k}^{+}$.
\item \textbf{Compute the other diagonals:} for $r=2,\ldots,n$, $k=0,\ldots,n-r$, solve the linear system
\begin{align}
\mB_{k+1}\wt{\Lambda}_{c,k,k+r}^{+}&=\mV^{+}\wt{\mG}_{c,k,k+r}^{+},\\
[\mD_{k+1}-c_{k+1}\mI^{-}]\wt{\Lambda}_{c,k,k+r}^{+}+q_{k+1}\wt{\Lambda}_{c,k+1,k+r}^{+}&=\mV^{-}\sum_{j=0}^{r}\wt{\Lambda}_{c,k,k+j}^{+}\wt{\mG}_{c,k+j,k+r}^{+},
\end{align}
for $\wt{\Lambda}_{c,k,k+r}^{+}$ and $\wt{\mG}_{c,k,k+r}^{+}$.
\item \textbf{Compute}
\begin{align*}
P^{+}(q_{1},\ldots,q_{n}):=q_{1}^{-1}\cdots q_{n}^{-1}\widehat{\Pi}_{c}^{+}(i,j;q_1,\ldots,q_n)=q_{1}^{-1}\cdots q_{n}^{-1}\sum_{\ell=0}^{n}\,\wt{\Lambda}_{c,0\ell}.
\end{align*}
for $q_1,\ldots,q_n>0$.
\item \textbf{Compute the  approximate inverse Laplace transform of $P^{+}(q_{1},\ldots,q_{n})$:} use the method discussed in Section \ref{subsubsec:specialcase}.
\end{enumerate}

\begin{remark}
If $|\mathbf{E}^{+}|=|\mathbf{E}^{-}|=1$, then the matrices in Steps 1-3 become numbers. Step 1 reduces to solving $n+1$ quadratic equations for a root in $[0,1]$. In Step 2 and 3, for each loop, the system reduces to a system of two linear equations of two unknowns. Moreover, in this case, $P^{+}$ has a closed-form representation for $q_1,\ldots,q_n>0$, and hence, for any $q_1,\ldots,q_n\in\bC^+$,  as mentioned in the previous section. This allows to use general numerical inverse Laplace transform methods, not necessary the Gaver-Stehfest formula from Section~\ref{subsubsec:specialcase}. In particular, one can use Talbot approximation formula \eqref{eq:talbotInversion} presented in Section~\ref{sec:inverLaplaceTransform}, which is more efficient than  the Gaver-Stehfest under fairly general assumptions (cf. \cite{abate2006unified}).
\end{remark}

\subsection{Application in Fluid flow problems}
The Wiener-Hopf factorization for a time-homogeneous finite Markov chain was applied in \cite{rogers1994fluid} in the context of fluid models of queues. In this section, we will apply our results to a time-inhomogeneous Markov chain fluid flow problem.

First, we briefly review the classical fluid flow problem (cf \cite{mitra1988} and \cite{rogers1994fluid} for detailed discussion). Suppose we have a large water tank with capacity $a\in (0,\infty]$. On the top of the tank, there are $I_{t}\in \mathcal{I}$ pipes open at time $t$, with each pipe pouring water into the tank at rate $r^{+}$. At the bottom of the tank, there are $O_{t}\in \mathcal{O}$ taps open at time $t$, with each tap allowing water to flow out at rate $r^{-}$. We assume that $\mathcal{I}$ and $\mathcal{O}$ are finite sets.

Then, the volume $\xi_{t}$ of water in the tank at time $t$ satisfies
\begin{align*}
\frac{\dif \xi_{t}}{\dif t}=r^{+}I_{t}-r^{-}O_{t}, \quad\text{if }0<\xi_{t}<a.
\end{align*}
Moreover, if $\xi_{t}=0$, i.e. if the tank is empty, then the outflow ceases. If $\xi_{t}=a$, i.e. if the tank is full, then water flows over the top.

Let $f$ be a real valued function on $\mathcal{I}\times \mathcal{O}$. We assume $X_{t}:=f(I_{t},O_{t}),\ t\geq 0,$ is a (finite state) time-inhomogeneous Markov chain, and we denote by $\mathbf{E}$ the state space of $X$. Let
\begin{align*}
v(x):=V(r^{+},r^{-},x),\quad x\in \mathbf{E},
\end{align*}
model the water outflow/inflow, in terms of the states of $X$, so that
\begin{align*}
v(X_{t})=V(r^{+},r^-,f(I_{t},O_{t})),\quad t\ge 0
\end{align*}
represents the water outflow/inflow at time $t$.

Let $\mathbf{E}^{+}$ be the set of states of $X$ such that the water tank has greater water inflow than outflow, and let $\mathbf{E}^{-}$ be the set of states of $X$ such that the water tank has greater water outflow than inflow. The integral
\begin{align*}
\varphi_{t}=\int_{0}^{t}v(X_{u})\dif u
\end{align*}
is not exactly the water content at time $t$, because we should take into account those periods when the tank is full or empty. However, as noted in \cite{rogers1994fluid}, understanding $\varphi_{t}$, and the corresponding $\tau_{t}^{\pm}$ and $X_{\tau_{t}^{\pm}}$ allows us to easily express the quantities of interest for $\xi_{t}$ in terms of Wiener-Hopf factorization, and to further compute these quantities once we compute the Wiener-Hopf factorization numerically.

We now assume that the tank has infinite capacity, $a=\infty$, and that it contains $\ell$ amount of water at time $t=0$. Thus,  $\tau_{\ell}^{-}$ represents the first time after $t=0$ that the tank goes empty. We will compute the quantity
\begin{align}\label{eq:computeTarget}
\Pi_{c}^{-}(i,j)=\mathbb{E}^{i}\left(e^{-c\tau_{0}^{-}}\1_{\{X_{\tau_{0}^{-}}=j\}}\right),\quad i\in\mathbf{E}^{+},\, j\in\mathbf{E}^{-}.
\end{align}
Towards this end, we further assume that the tank has either an aggregate water inflow at rate $v^{+}$ or an aggregate water outflow at rate $v^{-}$. In other words,
\begin{align*}
\mathbf{E}^{+}=\{e_+\},\quad\mathbf{E}^{-}=\{e_-\},\quad v(e_+)=v^{+},\quad\text{and}\quad v(e_-)=v^{-}.
\end{align*}
Moreover, we assume that the time-inhomogeneous Markov chain $X$ has the generator
\begin{align*}
\mG_{t}=\left\{\begin{array}{lll}
& \mG_{1},\quad & s_{0}\le t <s_{1},\\
& \mG_{2},\quad &s_{1}\le t <s_{2},\\
& \mG_{3},\quad &t\ge s_{2},
\end{array}\right.
\end{align*}
where $0<s_{1}<s_{2}$.

We take the following inputs: $c=0.5, v(e_+)=2, v(e_-)=-3,s_{1}=2,s_{2}=8$,
\begin{align*}
  \mG_{0}=
  \kbordermatrix{
    & e_+ & e_- \\
    e_+ &-2 & 2\\
   e_- &1 & -1
  }, \quad
    \mG_{1}=
    \kbordermatrix{
      & e_+ & e_- \\
      e_+ &-3 & 3\\
     e_- &2 & -2
    },\quad
      \mG_{2}=
      \kbordermatrix{
        & e_+ & e_- \\
        e_+ &-5 & 5\\
       e_- &3 & -3
      }.
\end{align*}
The following table compares our result and execution time with Monte-Carlo simulation (10000 paths).\\
\begin{center}
\begin{tabular}{llr}
\hline
\multicolumn{3}{c}{Numerical Results} \\
\cline{1-3}
Method    & Wiener-Hopf & Monte-Carlo \\
\hline
$\Pi_{c}^{-}(e_+,e_-)$      & $0.6501$    & $0.6462$      \\
Execution time &  $0.15\,s$    & $3.12\,s$      \\
\hline
\end{tabular}
\end{center}
\begin{remark}
One can also compute $\Pi_{c}^{+}(e_-,e_+)$, if it is the quantity of interest in the model. Note that if we change the labels of the states from $\{e_+,e_-\}$ to $\{e_-,e_+\}$ and modify the inputs accordingly, we can compute $\Pi_{c}^{+}(e_-,e_+)$ using the same algorithm that computes $\Pi_{c}^{-}(e_+,e_-)$.
\end{remark}

\section{Appendix: Approximation of Multivariate Inverse Laplace Transform}\label{sec:inverLaplaceTransform}
For the convenience of the reader, we will briefly recall the basics of Laplace transform and its inverse. Then, we will proceed with an important result regarding the approximation of the multivariate inverse Laplace transform.

Let $f:[0,\infty)^{n}\rightarrow [0,\infty)$ be a Borel-measurable function such that
\begin{align*}
\int_{0}^{\infty}\cdots\int_{0}^{\infty}f(t_{1},\ldots,t_{n})\prod_{k=1}^{n}e^{-q_{k}t_{k}}\dif t_{1}\cdots\dif t_{n}
\end{align*}
exists for any $q_{1},\ldots,q_{n}>0$. Then, the multivariate  Laplace  transform $\widehat{f}$ of $f$, defined by
\begin{align*}
\wh{f}(q_{1},\ldots,q_{n})=\cL(f)(q_{1},\ldots,q_{n}):=\int_{0}^{\infty}\cdots\int_{0}^{\infty}f(t_{1},\ldots,t_{n})\prod_{k=1}^{n}e^{-q_{k}t_{k}}\dif t_{1}\cdots\dif t_{n},
\end{align*}
is well-defined for any $q_{k} \in \bC^+$, $k=1,\ldots,n$, where\footnote{We will denote by $\Re(z)$ the real part of $z\in\bC$, and $\mathrm{i}=\sqrt{-1}$ will be used to denote the imaginary unit.} $\bC^+:= \set{z\in\bC \mid \Re(z)>0}$ with $\Re(z)$ denoting the real part of $z\in\bC$.
The inverse multivariate Laplace transform of function $g:(\bC^+)^n \to\bC$, is the function $\check{g}$,  such that $\cL(\check{g}) = g$. We will also write
$\check{g} = \cL^{-1}(g)$. The existence and uniqueness of the inverse Laplace transform is a well understood subject (cf. \cite{Widder1941}). Although there are explicit formulas of the inverse Laplace transform for many functions, generally speaking, in many practical situations the inverse Laplace transform of a function is computed by numerical approximation technics. We refer the reader to \cite{abate2006unified}, and the references therein, for  a unified framework for numerically inverting the Laplace transform. For sake of completeness, we present here one such method - the Talbot inversion formula - for one and two dimensional case; the multidimensional case is done by analogy.

Assume that $\wh{f}$ is the Laplace transform of a function $f:(0,+\infty)\to\bC$. The Talbot inversion formula to approximate $f$ is given by
\begin{align}\label{eq:talbotInversion}
f^{b}_M(t)=\frac2{5t}\sum_{k=0}^{M-1}\Re\left(\gamma_{k}\wh{f}(\frac{\delta_{k}}{t})\right),
\end{align}
where
\begin{align}
\delta_{0}&=\frac{2M}{5},\quad\delta_{k}=\frac{22k\pi}{5}(\cot(\frac{k\pi}{M})+\mathrm{i}),\quad 0<k<M,\nonumber\\
\gamma_{0}&=\frac12 e^{\delta_{0}},\quad\gamma_{k}=\left(1+\mathrm{i}\frac{k\pi}{M}(1+\cot^{2}(\frac{k\pi}{M}))- \mathrm{i}\cot(\frac{k\pi}{M}) \right)e^{\delta_{k}},\quad 0<k<M.\label{eq:talbotGamma}
\end{align}
Analogously, given a Laplace transform $\wh{g}$ of a complex-valued function $g$ of two non-negative real variables,
the Talbot inversion formula to compute $g(t_{1},t_{2})$ numerically is given by
\begin{align*}
g^{b}_M(t_{1},t_{2})=\frac{2}{25t_{1}t_{2}}\sum_{k_{1}=0}^{M-1}\Re\left\{\gamma_{k_{1}}\sum_{k_{2}=0}^{M-1}\left[\gamma_{k_{2}}\wh{g}\left(\frac{\delta_{k_{1}}}{t_{1}},\frac{\delta_{k_{2}}}{t_{2}} \right) +\bar{\gamma}_{k_{2}}\wh{g}\left(\frac{\delta_{k_{1}}}{t_{1}},\frac{\bar{\delta}_{k_{2}}}{t_{2}} \right)\right] \right\},
\end{align*}
where $\delta_{k},\gamma_{k},0\le k<M,$ are given in \eqref{eq:talbotGamma}.

\subsection{A Special Case of Numerical Inverse Laplace Transform}\label{subsubsec:specialcase}
 Let us consider a function $f:[0,\infty)\rightarrow [0,\infty)$ and its Laplace transform $\wh{f}(q)$, for $q\in\bC^+$. It turns out that the inverse Laplace transform of $f$ can be approximated numerically by using only values of the function $\wh{f}$ on the positive real line. One such approximation is the Gaver-Stehfest formula
\begin{align}\label{eq:GaverStehfest}
f_{n}(t)=\frac{n\log 2}{t}\binom{2n}{n}\sum_{k=0}^{n}(-1)^{k}\binom{n}{k}\wh{f}\left(\frac{(n+k)\log 2}{t} \right).
\end{align}
For other methods and the comparison of their speeds of convergence we refer to \cite{abate2006unified}. Consecutive application of \eqref{eq:GaverStehfest} leads to the multivariate Gaver-Stehfest formula.

\bibliographystyle{alpha}
{\small

\def\cprime{$'$}

}

\end{document}